\newtheorem{theorem}{Theorem}[section]
\newtheorem{lemma}[theorem]{Lemma}
\newtheorem{conjecture}[theorem]{Conjecture}
\newtheorem{proposition}[theorem]{Proposition}
\newtheorem{claim}[theorem]{Claim}
\newtheorem{corollary}[theorem]{Corollary}
\newtheorem{problem}[theorem]{Problem}
\newtheorem{property}[theorem]{Property}
\newtheorem*{problem41}{Problem~\ref{problem too many 2s}}%for main theorem in intro
\begin{document}

\title{Low Diameter Monochromatic Covers of Complete Multipartite Graphs}

\author{Sean English\footnote{Department of Mathematics, University of Illinois at Urbana-Champaign. E-mail: \texttt{senglish@illinois.edu}.} \and Connor Mattes\footnote{Department of Mathematical and Statistical Sciences, University of Colorado Denver. E-mail: \texttt{connor.mattes@ucdenver.edu}.} \and Grace McCourt\footnote{Department of Mathematics, University of Illinois at Urbana-Champaign. E-mail: \texttt{mccourt4@illinois.edu}, research supported in part by NSF RTG Grant DMS-1937241.} \and Michael Phillips\footnote{Department of Mathematical and Statistical Sciences, University of Colorado Denver. E-mail: \texttt{michael.2.phillips@ucdenver.edu}.}}

\date{}

\maketitle

%%%%%%%%%%%%%%%%%%%%%%%%%%%%%%
% Main Content
%%%%%%%%%%%%%%%%%%%%%%%%%%%%%%

% reset page numbering
\setcounter{page}{0}
\pagenumbering{arabic}

\begin{abstract}
	Let the diameter cover number, $D^t_r(G)$, denote the least integer $d$ such that under any $r$-coloring of the edges of the graph $G$, there exists a collection of $t$ monochromatic subgraphs of diameter at most $d$ such that every vertex of $G$ is contained in at least one of the subgraphs. We explore the diameter cover number with two colors and two subgraphs when $G$ is a complete multipartite graph with at least three parts. We determine exactly the value of $D_2^2(G)$ for all complete tripartite graphs $G$, and almost all complete multipartite graphs with more than three parts. 
\end{abstract}

\section{Introduction}\label{sec1}

Given a hypergraph $H$, we say $H$ is $r$-partite if there exists a partition $V(H)=V_1\cup V_2\cup\dots\cup V_r$ such that no edge of $H$ contains two or more vertices from $V_i$ for any $1\leq i\leq r$. The \emph{vertex cover number}, $\tau(H)$, is the minimum cardinality of a set $S\subseteq V(H)$ such that $S$ intersects every edge of $H$, and the \emph{matching number}, $\nu(H)$, is the size of a largest set of pairwise disjoint edges in $H$. Ryser's Conjecture \cite{Ryser} attempts to relate these two parameters in a strong way for $r$-partite graphs.

\begin{conjecture}\label{conjecture Rysers first formulation}
	Let $r\geq 2$ and let $H$ be an $r$-partite hypergraph. Then
	\[
	\tau(H)\leq (r-1)\nu(H).
	\]
\end{conjecture}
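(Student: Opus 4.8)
Since the statement is Ryser's conjecture, I do not expect a complete proof to be within reach; the honest plan is to settle the cases that are provable and to be explicit about where the argument breaks down. I would organize the attempt by induction on the number of parts $r$. The base case $r=2$ is exactly K\"onig's theorem: an $r$-partite hypergraph with $r=2$ is an ordinary bipartite (multi)graph, and K\"onig's minimax theorem gives $\tau(H)=\nu(H)$, which is the desired bound $\tau(H)\le(r-1)\nu(H)$ with equality. I would prove this directly from max-flow--min-cut or from an augmenting-path argument, so the base case is tight and unconditional.

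For $r=3$ a direct combinatorial construction of a small cover is hard to find, and I would instead follow the topological route of Aharoni. The idea is to build the independence complex associated with the tripartite structure and apply a topological Hall-type theorem (the Aharoni--Haxell criterion) that guarantees a large matching whenever this complex is sufficiently highly connected. One assumes for contradiction that every cover has size exceeding $2\nu(H)=(r-1)\nu(H)$, uses this assumption to lower-bound the topological connectivity of the relevant complex, and then extracts a matching of size $\nu(H)+1$, contradicting the definition of $\nu(H)$. The technical core, and the step on which I would spend the most effort, is the connectivity estimate for the independence complex, since this is where the three-part hypothesis is genuinely used.

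For $r\ge 4$ the natural continuation is to delete a carefully chosen vertex or edge, apply the inductive hypothesis to the resulting smaller hypergraph, and then argue that at most $\nu(H)$ additional cover vertices suffice to repair the deletion. This is precisely where I expect the plan to fail, and it is why the statement is a conjecture rather than a theorem: there is no known way to control how $\tau$ and $\nu$ respond to such surgery, and even the intersecting special case $\nu(H)=1$ — where the claim reduces to $\tau(H)\le r-1$ and the extremal examples come from truncated projective planes — is open for large $r$. I therefore anticipate that any self-contained argument will cover only $r\le 3$ (or structurally restricted families), and that proving the general bound would demand a genuinely new idea beyond induction combined with the topological machinery that resolves the small cases.
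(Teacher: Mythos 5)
You have correctly recognized that this statement is Ryser's Conjecture itself, which the paper states as an open conjecture and does not prove: it only records the known partial results (K\"onig's theorem for $r=2$, Aharoni's topological proof for $r=3$, Tuza's resolution of the $\nu(H)=1$ case for $r=4,5$, and the Haxell--Scott bound $\tau(H)\le (r-\epsilon)\nu(H)$), together with a proof in Section~\ref{section equivalent formulation} that the conjecture is \emph{equivalent} to the tree-cover formulation, not a proof of the conjecture. Your survey of what is provable ($r\le 3$, exactly by the routes the paper cites) and of where any inductive argument breaks down ($r\ge 4$, where even the intersecting case is open for large $r$) matches the state of the art as the paper presents it, so there is no gap to report and no proof to compare against.
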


Note that the classical K\"onig's Theorem \cite{Konig}, which states that the size of a minimum vertex cover in a bipartite graph $G$ is no bigger than the size of a maximum matching in $G$, confirms the case $r=2$ of Ryser's Conjecture. It was proved by Aharoni \cite{Aharoni} that Ryser's Conjecture also holds for $r=3$. However, already for $r = 4,5$ the only exact result is for the case $\nu(H) = 1$, shown by Tuza \cite{Tuza}, and the strongest result known in general is that there exists some $\epsilon > 0$ such that $\tau(H) \le (r - \epsilon) \nu(H)$, which was proved by Haxell and Scott \cite{45}.

An equivalent formulation of Ryser's Conjecture involves coverings of edge-colored graphs with monochromatic connected components. More specifically, a \emph{monochromatic connected subgraph cover} of an edge colored graph $G$ is a collection of monochromatic connected subgraphs of $G$ whose union contains every vertex. The \emph{monochromatic tree cover number} $\mathrm{tc}_r(G)$ is the least integer such that for any $r$-coloring of the edges of $G$, there exists a monochromatic connected subgraph cover with $\mathrm{tc}_r(G)$ subgraphs. The \emph{independence number} of $G$, $\alpha(G)$, is the size of the largest collection $S\subseteq V(G)$ such that no two vertices in $S$ are adjacent. The following is an equivalent formulation of Conjecture~\ref{conjecture Rysers first formulation}, first observed by Gy\'{a}rf\'{a}s \cite{Gyarfas}.
\begin{conjecture}\label{conjecture Rysers second formulation}
	For every graph $G$ and all $r\geq 2$,
	\[
	\mathrm{tc}_r(G)\leq (r-1)\alpha(G)
	\]
\end{conjecture}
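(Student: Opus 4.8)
The plan is to establish the inequality $\mathrm{tc}_r(G)\le(r-1)\alpha(G)$ as a consequence of Conjecture~\ref{conjecture Rysers first formulation} by encoding an arbitrary edge-colored graph as an $r$-partite hypergraph. Fix a graph $G$ together with an $r$-coloring of its edges. For each color $i\in\{1,\dots,r\}$, the color-$i$ subgraph of $G$ decomposes into monochromatic connected components, where a vertex incident to no color-$i$ edge is treated as its own singleton component; thus every vertex $v$ of $G$ lies in exactly one color-$i$ component $C^i(v)$. I would build a hypergraph $H$ whose vertex set is partitioned into parts $V_1,\dots,V_r$, where $V_i$ is the set of all color-$i$ components, and whose edge set is $\{e_v : v\in V(G)\}$ with $e_v=\{C^1(v),\dots,C^r(v)\}$. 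By construction each $e_v$ meets every part in exactly one vertex, so $H$ is $r$-partite (indeed $r$-uniform), which is precisely the setting of Conjecture~\ref{conjecture Rysers first formulation}.

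The next step is to translate the two hypergraph parameters back into statements about the coloring of $G$. First, a vertex cover of $H$ is a set $S$ of components meeting every edge $e_v$, that is, a collection of monochromatic connected subgraphs whose union contains every vertex of $G$; taking a minimum such $S$ and using its members as a monochromatic connected subgraph cover yields $\mathrm{tc}_r(G)\le\tau(H)$. Second, if two edges $e_u,e_v$ are disjoint then $u$ and $v$ lie in different color-$i$ components for every color $i$, so no edge of $G$ joins them and $\{u,v\}$ is independent; applying this to each pair in a matching shows that a matching of $H$ projects to an independent set of $G$ of the same size, whence $\nu(H)\le\alpha(G)$. Note that only these two one-directional inequalities are required.

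Combining these reductions with Conjecture~\ref{conjecture Rysers first formulation} gives the chain
\[
\mathrm{tc}_r(G)\le\tau(H)\le(r-1)\nu(H)\le(r-1)\alpha(G),
\]
which is exactly the desired bound. I would then record the unconditional cases: since K\"onig's Theorem and Aharoni's theorem verify Conjecture~\ref{conjecture Rysers first formulation} for $r=2$ and $r=3$ respectively, the inequality $\mathrm{tc}_r(G)\le(r-1)\alpha(G)$ holds unconditionally for those values of $r$.

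The main obstacle is not in the reduction itself, which is a clean encoding, but in two places. The delicate bookkeeping is ensuring that $H$ is genuinely $r$-partite and $r$-uniform: I must check that every vertex of $G$ receives a well-defined component in each color (hence the singleton-component convention) and that two $G$-adjacent vertices always share the component of the color of their connecting edge, since a color-$i$ edge $uv$ forces $C^i(u)=C^i(v)$, which is what makes the matching-to-independent-set projection valid. The deeper limitation is that this argument can only be as strong as Conjecture~\ref{conjecture Rysers first formulation}, which remains open for $r\ge4$; for those $r$ the inequality is established only relative to the first formulation, and an unconditional proof would require genuinely new ideas beyond this encoding.
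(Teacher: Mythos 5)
Your proposal is correct and takes essentially the same approach as the paper's argument in Section~\ref{section equivalent formulation}: your per-vertex edges $e_v=\{C^1(v),\dots,C^r(v)\}$ coincide exactly with the paper's ``maximal collections of components with non-empty vertex intersection,'' and both arguments establish $\mathrm{tc}_r(G)\le\tau(H)$ and $\nu(H)\le\alpha(G)$ before invoking Conjecture~\ref{conjecture Rysers first formulation}. You also correctly frame the result as conditional on that conjecture (unconditional only for $r=2,3$), matching the paper's treatment, so nothing is missing.
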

For a brief sketch of the proof that these two conjectures are equivalent, see Section~\ref{section equivalent formulation}.

Phrasing Ryser's Conjecture in terms of edge colorings leads to many interesting questions and connections. In particular, this phrasing of Ryser's Conjecture sheds light on the fact that the conjecture fits into the large body of work on Ramsey theory. In particular, Ryser's Conjecture is closely related to problems involving finding large monochromatic components in edge-colored graphs. Given an $r$-coloring of the edges of the complete graph $K_n$, it is known that there always exists a monochromatic component of size at least $n/(r-1)$~\cite{Gyarfas}, and that this is sharp for infinitely many values of $n$ and $r$. Conjecture~\ref{conjecture Rysers second formulation}, if true, implies this as well, and thus can be thought of as a strengthening of the question of the size of the largest monochromatic component. Large monochromatic components has been studied intensively in both graphs and hypergraphs~\cite{Furedi}, \cite{Haxell}

In this paper, we will explore structural properties of the monochromatic connected subgraphs in a cover, in particular we consider subgraphs with bounded diameter. Assume $t \geq \mathrm{tc}_r(G)$. Then we define the \emph{diameter cover number}, $D_r^t(G)$, to be the least integer $d$ such that in every $r$-coloring of the edges of $G$, there exists a monochromatic connected subgraph cover of $G$ with $t$ subgraphs such that every subgraph has diameter at most $d$.

The diameter cover number was first studied by Mili\'cevi\'c \cite{Milicevic}, who considered 3- and 4-colored complete graphs as well as 2-colored complete bipartite graphs, using the result for 3-colored complete graphs to prove a generalization of Banach's fixed point theorem.

\begin{theorem}[{\cite{Milicevic}}] \
	\begin{enumerate}
		\item For all complete graphs $K_n$, $D_3^2(K_n) \leq 8$.
		\item For all complete graphs $K_n$, $D_4^3(K_n) \leq 80$.
		\item For all complete bipartite graphs $K_{m,n}$, $D_2^2(K_{m,n}) \leq 9$.
	\end{enumerate}
\end{theorem}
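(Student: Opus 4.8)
The plan is to split each bound into two stages: first pin down the correct \emph{number} of monochromatic components needed to cover $V(G)$, and then upgrade each such component to a monochromatic subgraph of bounded diameter. The first stage is essentially Ryser-type counting, while the second stage carries all the geometric content.

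For the counting stage I would invoke the known component results. For part~(1), since $\alpha(K_n)=1$, the case $r=3$ of Conjecture~\ref{conjecture Rysers second formulation}, established by Aharoni~\cite{Aharoni}, gives $\mathrm{tc}_3(K_n)\le 2$; for part~(2), the $\nu=1$ (equivalently $\alpha=1$) instance of the $r=4$ case, due to Tuza~\cite{Tuza}, gives $\mathrm{tc}_4(K_n)\le 3$. For part~(3) a direct and elementary argument shows $\mathrm{tc}_2(K_{m,n})\le 2$: fixing $b_0\in B$, every $a\in A$ lies in the red or the blue component of $b_0$, and a short case check (either one color already spans, or the coloring is a $2\times 2$ ``grid'' split $A=A_1\cup A_2$, $B=B_1\cup B_2$) shows two monochromatic components cover $A\cup B$. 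In each case this matches the number of subgraphs $t$ appearing in the statement.

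The diameter stage is the real content, and the idea I would push is a rerouting principle: in a complete or complete bipartite graph every relevant pair of vertices is adjacent, so a vertex sitting ``deep'' inside one monochromatic component is directly joined, in some other color, to the root of another component. Concretely for part~(3), fix $a_0\in A$ and split $B=B_R\cup B_B$ into the red and blue neighbours of $a_0$. Let $H_1$ be the red star at $a_0$ on $B_R$ together with every $a\in A$ having a red edge into $B_R$ (attached by one such edge), and let $H_2$ be the analogous blue object on $B_B$; each has diameter at most $4$, and together they cover $\{a_0\}\cup B$ and every ``aligned'' vertex of $A$. The only vertices left are the ``opposite'' ones, all-blue to $B_R$ and all-red to $B_B$; each such $a$ is red-adjacent to every vertex of $B_B$, and I would absorb it into $H_1$ along a short red path $a\,b'\,a''\,b'''\,a_0$ (with $b'\in B_B$ and $a''$ already attached to $H_1$), pushing the diameter up to the claimed constant; the borderline configurations in which no such path exists force the ``grid'' coloring, which is covered directly by two monochromatic stars. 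The bound $9$ is precisely what this worst-case path bookkeeping yields. Parts~(1) and~(2) follow the same philosophy over $K_n$: take the two (resp.\ three) covering components, root each at a well-chosen vertex, keep the vertices reachable by short monochromatic paths from that root, and reassign the remaining vertices to another component using a direct edge in a different color; the constants $8$ and $80$ then emerge from bounding the lengths of these rerouting paths.

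I expect this reassignment bookkeeping to be the main obstacle throughout. The difficulty is not producing \emph{some} bounded-diameter cover, but controlling, across all color patterns simultaneously, how far a reassigned vertex can land from its new root, and ruling out the extremal colorings that would otherwise inflate the diameter. For part~(2) three components and four colors interact, so the number of structural cases—and the maximum reroute length—grows dramatically, which is exactly why its constant is so much larger than those in parts~(1) and~(3).
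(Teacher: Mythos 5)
First, a framing point: the paper never proves this theorem --- it is quoted verbatim from Mili\'cevi\'c \cite{Milicevic} as background --- so there is no internal proof to compare against, and your proposal has to be judged on its own terms. Your counting stage is fine: $\mathrm{tc}_3(K_n)\le 2$, $\mathrm{tc}_4(K_n)\le 3$ (Tuza's $\nu=1$ case), and $\mathrm{tc}_2(K_{m,n})\le 2$ are all standard, the last being essentially the argument in Section~\ref{tc} of this paper. Your sketch of part (3) is also essentially completable, and in fact more cleanly than you state it: an ``opposite'' vertex is complete red to $B_B$, so either \emph{some} $b'\in B_B$ has a red neighbour that also sends a red edge into $B_R$ --- in which case \emph{every} opposite vertex can be absorbed through that single $b'$, and the path bookkeeping closes comfortably under the stated constant --- or no such $b'$ exists, in which case every vertex of $A$ with a red edge into $B_R$ is complete blue to $B_B$, and two blue complete-bipartite subgraphs of diameter $2$ cover everything (your ``grid'' case). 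Written out carefully, that is a correct proof of part (3).

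The genuine gap is in parts (1) and (2), for which you offer only ``the same philosophy.'' The rerouting principle is intrinsically a two-colour device: in $K_{m,n}$ the direct edge from a stranded vertex to the other root must carry one of the two colours, hence is always usable. With three colours and only two covering subgraphs, say red and blue, the direct edge from a stranded vertex $v$ to the blue root may be green; then $v$ cannot be reassigned by any direct edge, and its red edges may land only in remote parts of the red component. This is precisely the obstruction that makes $D_3^2(K_n)\le 8$ nontrivial, and nothing in your outline addresses it --- the constant $8$ is asserted to ``emerge,'' not derived. The situation is worse for part (2): nothing in your sketch explains why three subgraphs and four colours should cost $80$ rather than, say, $12$; a tenfold jump over the three-colour constant signals a structural reduction (applying the three-colour result inside pieces of the four-coloured graph, with multiplicative loss in diameter), not longer reroute paths. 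As written, parts (1) and (2) are a plan for a proof, not a proof, and the plan's central mechanism fails exactly where those parts need it.
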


The above results were improved and extended by DeBiasio, Kamel, McCourt, and Sheats~\cite{Grace}. 

\begin{theorem}[{\cite{Grace}}] \
	\begin{enumerate}
		\item For all complete graphs $K_n$ with $n\geq 7$, $3 \leq D_3^2(K_n) \leq 4$.
		\item For all complete graphs $K_n$ with $n \geq 5$, $2 \leq D_4^3(K_n) \leq 6$.
		\item For all complete bipartite graphs $K_{m,n}$ with $m \geq 3$, $n\geq 4$, $3 \leq D_2^2(K_{m,n}) \leq 4$.
		\item For all complete bipartite graphs $K_{m,n}$, $D_3^4(K_{m,n}) \leq 6$.
		\item For all graphs $G$ on at least 7 vertices with $\alpha(G) = 2$, $3 \leq D_2^2(G) \leq 6$.
	\end{enumerate}
\end{theorem}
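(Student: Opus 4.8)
The five bounds share a common engine, so the plan is to prove them through one technique and a case split on the host graph. Write $B_c(v,\rho)$ for the \emph{monochromatic ball} consisting of all vertices reachable from $v$ by a path of length at most $\rho$ using only edges of color $c$; this is a connected monochromatic subgraph of diameter at most $2\rho$. Every upper bound will be obtained by exhibiting, for an arbitrary coloring, a cover of $V(G)$ by $t$ such balls with $\rho\in\{2,3\}$, which yields diameter at most $4$ or $6$ respectively. The existence of \emph{some} cover by $t$ monochromatic components is already known: for $K_n$ one has $\mathrm{tc}_3(K_n)\le 2$ (Aharoni's theorem, the $r=3$ case of Conjecture~\ref{conjecture Rysers second formulation} with $\alpha(K_n)=1$) and $\mathrm{tc}_4(K_n)\le 3$ (Tuza's $\nu=1$ result), while for the bipartite and $\alpha(G)=2$ hosts the bounds $\mathrm{tc}_2(K_{m,n})\le 2$, $\mathrm{tc}_3(K_{m,n})\le 4$, and $\mathrm{tc}_2(G)\le\alpha(G)=2$ are available. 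Thus the whole content of the upper bounds is to \emph{upgrade} these covers so that each piece is a low-radius ball.

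For the radius-$2$ bounds (items 1 and 3) I would root at a single well-chosen vertex. In item 1, fix $v$ and partition $V\setminus\{v\}$ into $R,B,G$ by the color of the edge to $v$. A vertex $w$ lies in $B_{\mathrm{red}}(v,2)$ as soon as it has a red edge into $R$, and in $B_{\mathrm{blue}}(v,2)$ as soon as it has a blue edge into $B$; hence $B_{\mathrm{red}}(v,2)\cup B_{\mathrm{blue}}(v,2)$ covers everything \emph{unless} some vertex sends no red edge to $R$ and no blue edge to $B$. Item 3 is identical with parts $X,Y$ in place of the color classes, where the obstruction is an ``antipodal'' vertex whose coloring to $Y$ is the exact complement of $v$'s. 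In both cases the exceptional vertices are highly structured (pairwise twin-like in the relevant colors), and one disposes of them by re-rooting one of the two balls at such a vertex or by absorbing them into a single extra star; a short case analysis then shows two radius-$2$ balls always suffice.

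The radius-$3$ bounds (items 2, 4 and 5) follow the same scheme but must tolerate longer monochromatic paths, which is exactly why the diameter rises to $6$. For $\alpha(G)=2$ (item 5) the host is no longer complete, so two vertices need not be adjacent and a monochromatic ``common neighbor'' step may cost an extra edge; since $\overline{G}$ is triangle-free, however, every pair of non-adjacent vertices has a large common neighborhood, and one shows each of the two König components can be spanned within radius $3$. For item 2 one takes the three components guaranteed by Tuza's bound and shows each admits a radius-$3$ spanning ball by a double-star argument inside a $4$-colored complete graph; item 4 does the same for the four components in the $3$-colored bipartite host. In each case the work is to verify that the chosen center reaches its whole component in three steps.

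Finally, the lower bounds are by explicit colorings, and they are where the matching sharpness lives. For $D_4^3\ge 2$ (item 2) take a Ramsey-type $4$-coloring in which every monochromatic clique has size $O(\log n)$: a diameter-$1$ subgraph is a monochromatic clique, so three of them cover $O(\log n)<n$ vertices and diameter $1$ is impossible once $n$ is large. For the bound $3$ in items 1, 3 and 5 I would build colorings in which every monochromatic subgraph of diameter at most $2$ is essentially a star about a dominating vertex (a radius-$1$ ball), arranged so that no two such stars cover all of $V$, leaving a vertex uncovered. The small cases excluded by the stated thresholds ($n\ge 7$, $n\ge 5$, $m\ge 3,n\ge 4$) are dispatched by direct finite checking. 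The main obstacle throughout is the exceptional-vertex analysis in the upper bounds: proving that the few vertices escaping the first two (or three) balls can always be reabsorbed without exceeding either the budget of $t$ subgraphs or the target radius; a secondary difficulty is certifying, for each lower-bound coloring, that \emph{no} choice of two (respectively three) diameter-$(d-1)$ monochromatic subgraphs covers the vertex set.
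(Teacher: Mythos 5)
This theorem is quoted from \cite{Grace}; the present paper contains no proof of it, so your proposal can only be judged on its own terms, and on those terms it is a programme rather than a proof. Every decisive step is deferred: ``a short case analysis then shows two radius-$2$ balls always suffice,'' ``one disposes of them by re-rooting \dots or by absorbing them into a single extra star,'' ``one shows each of the two \dots components can be spanned within radius $3$.'' Those case analyses \emph{are} the theorem. Moreover, the central upgrading idea is unsound for the radius-$3$ items: a monochromatic component guaranteed by Tuza's bound (item 2) or by $\mathrm{tc}_2(G)\le\alpha(G)$ (item 5) can have arbitrarily large diameter --- a color class whose component is a long induced path admits no center reaching it in three steps --- so you cannot ``upgrade'' the components handed to you by a tree-cover theorem; the low-diameter cover must be built from scratch with different subgraphs (double stars, balls around vertices chosen via the coloring), which is exactly what the deferred analysis would have to do. Even in item 1 the structure you assert for the exceptional vertices is not established: with a third color available, a vertex with no red edge into $R$ and no blue edge into $B$ can still send green edges everywhere, and nothing you wrote shows such vertices are ``twin-like'' or absorbable by re-rooting.

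The lower bounds have equally concrete gaps. Item 2 claims $2\le D_4^3(K_n)$ for \emph{all} $n\ge 5$, but a Ramsey-type coloring in which monochromatic cliques have size $O(\log n)$ only works for large $n$; what is actually needed is an explicit bad $4$-coloring of $K_5$ (which would then extend to every $n\ge 5$, since the restriction of a diameter-$1$ monochromatic subgraph to the five special vertices is again a monochromatic clique --- a transfer argument your proposal never makes). For the lower bound of $3$ in items 1, 3, and 5 you give no coloring at all, and the heuristic that diameter-$2$ monochromatic subgraphs are ``essentially stars'' is false in general --- diameter-$2$ graphs can be far from stars, a point the present paper itself emphasizes with the example in Figure~\ref{beetle} --- so certifying that no two such subgraphs cover $V$ requires a genuine argument for the specific coloring, not a structural shortcut. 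As written, none of the five items is proved.
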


We focus on $2$-colorings of complete multipartite graphs with at least three parts, and prove the following theorem in Section~\ref{sec2}.

\begin{theorem}\label{theorem tripartite upper bound}
	For all $a,b,c\geq 1$,
	\[
	D_2^2(K_{a,b,c})\leq 3.
	\]
\end{theorem}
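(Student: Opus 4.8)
The plan is to cover $V(K_{a,b,c})$ by two monochromatic double stars, each of which has diameter at most $3$; since a star has diameter at most $2$ and a double star (two adjacent centers together with leaves hung on each center) has diameter at most $3$, producing such a cover immediately yields $D_2^2(K_{a,b,c})\le 3$. Write the three parts as $A,B,C$ and color the edges red and blue. Throughout, a \emph{red double star centered at an edge $xy$} denotes the subgraph consisting of $x$, $y$, and all red edges incident to $x$ or to $y$ (and analogously for blue); such a subgraph is connected, monochromatic, and has diameter at most $3$, so the whole task reduces to finding two such objects whose vertex sets cover $A\cup B\cup C$.

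First I would fix a single vertex $v$, say $v\in A$, chosen by an extremal rule to be pinned down later. Let $R$ and $B'$ be the red- and blue-neighborhoods of $v$, so $R\cup B'=B\cup C$. The idea is to hang \emph{both} a red and a blue double star on $v$: pick $u\in R$ and $w\in B'$ and take the red double star centered at $vu$ together with the blue double star centered at $vw$. Their union automatically contains $v$ and $R\cup B'=B\cup C$, so only $A\setminus\{v\}$ remains to be accounted for. Since $u,w\in B\cup C$ are adjacent to every vertex of $A$, a vertex $a'\in A\setminus\{v\}$ is covered precisely when $a'u$ is red or $a'w$ is blue; hence the two double stars cover everything as soon as $u\in R$ and $w\in B'$ are chosen so that no $a'\in A\setminus\{v\}$ is simultaneously blue to $u$ and red to $w$. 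The first main step is to verify the easy regime: if some $u\in R$ is red to all of $A\setminus\{v\}$, or some $w\in B'$ is blue to all of $A\setminus\{v\}$, such a pair exists and we finish. The degenerate possibilities $R=\varnothing$ or $B'=\varnothing$ are treated directly, typically by a single double star based at $v$.

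The main obstacle is the \emph{conflict case}, in which for every admissible pair $(u,w)$ there is a vertex of $A\setminus\{v\}$ that is blue to $u$ and red to $w$. Here the edges between $B$ and $C$ — absent from the bipartite setting, and exactly the feature that should let us reach diameter $3$ rather than the bipartite bound of $4$ — become essential. I would exploit that $u$ and $w$ can be taken in different parts (say $u\in B$ and $w\in C$), so that $uw$ is itself an edge, and then re-root one double star across this $B$–$C$ edge: the conflicted vertices of $A$, being red to $w$, are gathered into a red double star based at $w$ through a suitably colored cross edge, while a second monochromatic double star sweeps up the remainder of $A\cup B\cup C$. Carrying this out needs a short case analysis on the colors of the $B$–$C$ edges at the candidate centers, combined with the extremal choice of $v$ (for instance taking $v$ so that $|R|$ is minimal, or $v$ in a largest part) to bound the number of conflicted vertices and contradict the assumption that the conflict persists for all pairs. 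I expect essentially all of the real work to lie in this conflict analysis; because every two vertices in distinct parts are adjacent, all relevant distances stay at most $3$ automatically, so monochromaticity and connectivity are routine once the correct pair of centers is located.
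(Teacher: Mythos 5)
Your reduction is sound as far as it goes: hanging a red double star at $vu$ and a blue double star at $vw$ does automatically cover $\{v\}\cup B\cup C$, and such a cover succeeds precisely when $u\in R$ and $w\in B'$ can be chosen so that every vertex of $A\setminus\{v\}$ that is blue to $u$ is also blue to $w$. But this is where your proof stops being a proof. The ``conflict case,'' in which every admissible pair $(u,w)$ has a conflicted vertex, is exactly where the entire difficulty of the theorem is concentrated, and what you offer there is a strategy --- ``re-root one double star across this $B$--$C$ edge,'' ``a short case analysis,'' an unspecified extremal choice of $v$ --- rather than an argument. No case analysis is actually carried out, no contradiction is derived, and the obvious danger of nested conflicts (the re-rooted double star misses its own set of vertices, which must in turn be swept up) is never addressed. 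By your own admission ``essentially all of the real work'' lies in this case, and that work is absent.

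There is, moreover, concrete reason to doubt that a double-star-only strategy can be completed at all. The paper proves Theorem~\ref{theorem tripartite upper bound} by assuming a counterexample, importing from \cite{Grace} that both color classes must then have diameter exactly $4$, running a distance layering from a vertex $v$ of red eccentricity $4$, and collapsing the coloring to a rigid configuration in which the blue graph on $V(G)\setminus A_2$ is exactly a $C_6$-blowup on the blocks $\{v\},B_3,C_1,A_4,B_1,C_3$ and the red graph there consists of two disjoint triangle-blowups. The covering subgraphs it then constructs --- $G_{blue}[V(C_6^+)\cup A_{2,\text{blue}}]$ and $G_{red}[B_1\cup C_1\cup A_{2,\text{red}}]$ --- are \emph{not} double stars: a $C_6$-blowup has no dominating edge, and the vertices of $X_3\subseteq A_{2,\text{red}}$ attach to arbitrary, not common, red neighbors in $B_1$ and $C_1$. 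In that configuration any blue double star misses two consecutive blocks of the $C_6$-blowup entirely, and any red double star lies inside one of the two red components, so whether the troublesome set $A_2$ can always be absorbed by \emph{some} pair of monochromatic double stars is exactly as hard as your unresolved conflict case; neither you nor the paper establishes it. Finally, a smaller but genuine slip: when $R=\emptyset$, ``a single double star based at $v$'' does not in general cover $A\setminus\{v\}$, since those vertices are nonadjacent to $v$; this degenerate case requires the star-plus-double-star statement quoted in the paper as Lemma~\ref{lemma prev paper dom vtx}.
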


Note that $tc_2(G) \le 2$ for any complete multipartite graph $G$. This was shown by Chen, Fujita, Gy\'arf\'as, Lehel, and T\'oth \cite{biclique}, and we offer a sketch of a proof of it in section \ref{tc}.

It is worth noting that if $F$ is a spanning subgraph of $G$ and $\mathrm{tc}_r(F)\geq t$, then $D_r^t(F)\geq D_r^t(G)$, so the bound in Theorem~\ref{theorem tripartite upper bound} also applies to all complete $k$-partite graphs with $k \geq 3$.

In Section~\ref{sec3},  we improve on Theorem~\ref{theorem tripartite upper bound}, and provide a complete characterization of the value of $D_2^2(G)$ for all complete tripartite graphs $G$. In addition, for every $k\geq 4$, we determine the value of $D_2^2(G)$ for all but finitely many complete $k$-partite graphs $G$. We also explore the problem of categorizing more values of $D_2^2(G)$. In particular, it is easy to show that if $G$ has a vertex adjacent to all other vertices then $D_2^2(G)\leq 2$. However, it is unclear if  $D_2^2(G)\leq 2$ for any complete $k$-partite graphs without a part of size $1$, when $k$ is large. Let $G_k$ denote the complete multipartite graph with $k$ parts each of size $2$. We propose the following conjecture:
\begin{conjecture}\label{conjecture too many 2s}
	For all $k \ge 3$,
	\[ D_2^2(G_k) = 2.  \]
\end{conjecture}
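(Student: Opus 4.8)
The plan is to prove both inequalities, with essentially all the difficulty in the upper bound. The lower bound $D_2^2(G_k)\ge 2$ is the easier direction. A monochromatic subgraph of diameter at most $1$ is a monochromatic clique, and since every part of $G_k$ is an independent set of size $2$, such a clique meets each part at most once and hence has at most $k$ vertices. So covering all $2k$ vertices with two diameter-$1$ subgraphs forces the two cliques to be complementary transversals $T$ and $V(G_k)\setminus T$, each inducing a monochromatic $K_k$. I would rule this out by exhibiting, for each $k\ge 3$, a $2$-coloring admitting no such complementary pair of monochromatic transversal cliques: for all sufficiently large $k$ a random coloring of the underlying $K_{2k}$ has maximum monochromatic clique of order $O(\log k)\ll k$, and the finitely many remaining small values can be settled by an explicit construction or direct computation. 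This gives $D_2^2(G_k)\ge 2$.

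For the upper bound, fix a $2$-coloring $\chi$ of the edges, write $P_i=\{x_i,y_i\}$ for the parts and $\bar v$ for the partner of $v$ in its part, so that $G_k=K_{2k}$ minus the perfect matching $\{v\bar v\}$ and $N(x_i)=N(y_i)=V(G_k)\setminus P_i$. The basic building block is a monochromatic \emph{star}: for any vertex $v$ and color $c$, the set $\{v\}\cup N_c(v)$ spans a monochromatic subgraph of diameter at most $2$. The first step is a local reduction. Fix a part $P_1$ and classify each of the remaining $2k-2$ vertices $z$ by the pair of colors $(\chi(x_1 z),\chi(y_1 z))\in\{RR,RB,BR,BB\}$. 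Because $x_1$ and $y_1$ have the same neighborhood and cover each other as star centers, two stars centered at $x_1$ and $y_1$ with appropriately chosen colors cover everything except one of the four classes: a red star at $x_1$ together with a blue star at $y_1$ leaves uncovered exactly $BR=\{z:\chi(x_1 z)=\text{blue},\ \chi(y_1 z)=\text{red}\}$, while the other three color choices leave uncovered $RB$, $BB$, or $RR$ respectively. Thus if \emph{some} part has an empty class, two stars already give the cover and we are done.

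The crux is therefore the ``balanced'' case in which every part has all four classes nonempty, where no single-part pair of stars suffices. I would attack this in two ways. The first keeps two star centers but places them in different parts: for a red star at $x_i$ and a blue star at $x_j$ with $i\ne j$, the uncovered set becomes $\{z:\chi(x_i z)=\text{blue},\ \chi(x_j z)=\text{red}\}$ together with the partners $y_i,y_j$ when their edges to the opposite center have the wrong color, and the aim is to show that some choice of the ordered pair of centers and colors empties this set, via an averaging or extremal argument over all such choices. The second, more robust, approach relaxes from stars to general diameter-$2$ monochromatic subgraphs: starting from the cover of $V(G_k)\setminus\{\bar u\}$ by a red and a blue star at a common vertex $u$, one must absorb the single leftover partner $\bar u$. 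Since $\mathrm{tc}_2(G_k)\le 2$, two monochromatic connected subgraphs always cover $V(G_k)$; the task is to realize such a cover with diameter $2$, i.e.\ to show the leftover vertices admit a common monochromatic neighbor of the correct color so they attach to an existing center through a ``book'' (friendship) configuration rather than a long path.

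I expect the main obstacle to be exactly this diameter control in the balanced case. Attaching a pendant vertex to a star through an intermediate vertex typically creates distance $3$ between the new vertex and the other leaves, so the naive repair fails; ruling this out seems to require a genuinely global statement about the interaction of the red and blue graphs of $G_k$ (which are near-complementary inside $K_{2k}$, differing only by the one missing edge at each vertex), rather than the local single-part analysis that handles the unbalanced case. Simultaneously controlling the two partner vertices $\bar u,\bar w$ of the two centers while keeping both subgraphs of diameter at most $2$ is the precise point where the argument must be pushed. I would support the plan with verification of the small cases $k=3,4,5$, the case $k=3$ already following from the complete tripartite characterization of Section~\ref{sec3}.
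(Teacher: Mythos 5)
You should be aware that the statement you set out to prove is Conjecture~\ref{conjecture too many 2s} of the paper, which is \emph{open}: the authors give no proof, only computer verification of the cases $k=3,4,5$ and, in Section~\ref{sec4}, structural properties that any counterexample coloring would have to satisfy. Your proposal also does not constitute a proof, and the gap sits exactly where you say it does. The lower bound $D_2^2(G_k)\geq 2$ is easy and your sketch of it is fine in spirit, but the entire content of the conjecture is the upper bound, and there your argument terminates in the ``balanced'' case where every part has all four color classes $(\chi(x_1z),\chi(y_1z))\in\{RR,RB,BR,BB\}$ nonempty. For that case you offer only candidate strategies --- an unspecified ``averaging or extremal argument'' over ordered pairs of star centers in different parts, and an absorption scheme for leftover partner vertices --- and you yourself concede that attaching a vertex to a star through an intermediate vertex creates distance $3$, so the naive repair fails. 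A plan that ends with ``this is the precise point where the argument must be pushed'' identifies the open problem; it does not solve it.

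It is worth noting how your partial progress lines up with the paper's. Your first reduction is precisely the contrapositive of Property~\ref{property distance 2}: classifying the vertices outside a part by the pair of colors they send to $x_1$ and $y_1$, and observing that an empty class yields a cover by two stars, is the same as the paper's claim that in a counterexample every clone pair $v,v'$ has all four sets $X_{i,j}$ nonempty. Beyond that point the paper takes a different and more structured route than either of your two proposed attacks: it fixes a vertex $x$ with some vertex at blue-distance at least $3$, partitions $V(G_k)$ into the sets $A_{i,j}$ of vertices at blue-distance $i$ from $x$ and $j$ from $x'$, and proves $A_{2,3}\cup A_{3,2}\cup A_{3,3}=\emptyset$ (Property~\ref{property empty sets}), the nonemptiness statements of Corollary~\ref{corollary nonempty sets}, and that clones of vertices in $A_{1,3}$ must lie in $A_{2,1}$ (and symmetrically) --- using covers built not only from stars but from red $C_5$ blow-ups. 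Even with this finer structure the authors could not close the conjecture, which is a useful calibration: the single-part, two-star analysis you propose is strictly weaker than what is already known to be insufficient, so any genuine attempt should start from the paper's distance-partition framework (or something stronger), not from star centers alone.
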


In Section~\ref{sec4} we provide partial results towards proving this conjecture.

\subsection{Definitions and Notations}

Given a graph $G$ and a vertex $v\in V(G)$, let $N(v)$ denote the open neighborhood of $v$. Given a graph $G$ with $V(G)=\{v_1,v_2,\dots,v_n\}$, a \emph{blow-up} of $G$ is a graph $F$ with a partition $V(F)=V_1\cup V_2\cup\dots\cup V_n$ of non-empty vertex sets such that for all $a\in V_i$ and $b\in V_j$, $ab\in E(F)$ if and only if $v_iv_j\in E(G)$. Given two disjoint subsets $A,B\subseteq V(G)$, we will let $G[A]$ denote the subgraph of $G$ induced on the vertex set $A$, and let $G[A,B]$ denote the bipartite graph induced in $G$ with parts $A$ and $B$. Recall that the eccentricity of a vertex $x$ in a connected graph $F$, denoted $\mathrm{ecc}_F(x)$ is defined as
\[
\mathrm{ecc}_F(x)=\max\{d_F(x,y)\mid y\in V(F)\}.
\]
Note that if a graph $H$ is connected, then there exists a vertex $x\in V(H)$ such that $\mathrm{ecc}_H(x)=\mathrm{diam}(H)$.

\subsection{Equivalent Formulation}\label{section equivalent formulation}

In this section, we provide a brief proof that conjectures \ref{conjecture Rysers first formulation} and \ref{conjecture Rysers second formulation} are equivalent.

First, let us assume Conjecture \ref{conjecture Rysers first formulation} is true. Fix a graph $G$ with an $r$-coloring of the edges of $G$. Let us define a hypergraph $H$ where the vertex set of $H$ is the collection of monochromatic components of $G$, and the edges of $H$ are maximal collections of components with non-empty vertex intersection. Since two components of the same color do not intersect, $H$ is $r$-partite. Note that any monochromatic component cover of $G$ corresponds to a vertex cover of $H$, and thus $\mathrm{tc}_r(G)\leq \tau(H)$. Furthermore, given a matching $M\subseteq E(H)$, $M=\{e_1,e_2,\dots,e_k\}$ for each edge $e_i$ in $M$, there is at least one vertex $v_i\in V(G)$ such that $v_i$ is in the intersection of all components that are contained in $e_i$. Consider two vertices, $v_i$ and $v_j$, $i \neq j$, in the intersection of all components that are contained in $e_i$ and $e_j$, respectively. If $v_iv_j\in E(G)$, say of color $c$, then in order for $e_i$ and $e_j$ to be disjoint, one of them must not contain a component of color $c$. However, this contradicts that all edges of $H$ are maximal collections of components. Thus, $\{v_1,v_2,\dots,v_k\}\subseteq V(G)$ is an independent set and so $\nu(H)\leq \alpha(G)$. Then Conjecture~\ref{conjecture Rysers first formulation} implies that
\[
\mathrm{tc}_r(G)\leq \tau(H)\leq (r-1)\nu(H)\leq (r-1)\alpha(G),
\]
and thus Conjecture~\ref{conjecture Rysers first formulation} implies Conjecture~\ref{conjecture Rysers second formulation}.

Now let us assume Conjecture~\ref{conjecture Rysers second formulation} holds. Let $H$ be an $r$-partite hypergraph with partition $V(H)=V_1\cup V_2\cup\dots\cup V_r$. Define a graph $G$ with $V(G)=E(H)$. For each pair $u,v\in V(G)$, add the edge $uv$ to $E(G)$ if $u\cap v\neq \emptyset$ as edges of $H$. Color this edge color $i$, where $i$ is the smallest integer such that $u\cap v\cap V_i$ is non-empty. Given a monochromatic component cover of $G$, say $G_1,G_2,\dots,G_k\subseteq G$, where $G_i$ is color $c_i$, there is a unique vertex $v_i$ of $H$ in $V_{c_i}$ such that for all $u\in V(G_i)$, $u\cap V_{c_i}=\{v_i\}$. Thus the vertex $v_i$ in $H$ covers every edge corresponding to a vertex in $V(G_i)$. Since $\bigcup_{i=1}^k V(G_i)=V(G)=E(H)$, this implies that $\{v_1,v_2,\dots,v_k\}$ is a vertex cover of $H$, and thus $\tau(H)\leq \mathrm{tc}_r(G)$. Furthermore, given non-adjacent vertices $u,v\in V(G)$, this implies that as edges of $H$, $u\cap v=\emptyset$, and thus an independent set of $G$ corresponds to a matching of $H$, and so $\alpha(G)\leq \nu(H)$. Thus, Conjecture~\ref{conjecture Rysers second formulation} gives us that
\[
\tau(H)\leq \mathrm{tc}_r(G)\leq (r-1)\alpha(G)\leq (r-1)\nu(H),
\]
so Conjecture~\ref{conjecture Rysers first formulation} holds.

\subsection{For Any Complete Multipartite Graph $G$, $tc_2(G) \le 2$ }\label{tc}

Note that adding edges can only decrease the tree cover number, so it suffices to show this for complete bipartite graphs. Let $G = (A,B,E)$ be a complete bipartite graph, and fix $v \in A$. Let $A'$ and $B'$ be the sets of vertices in $A$ and $B$, respectively, which are covered by the maximum red component containing $v$. If $A' = A$, then we can cover the graph with the maximum red and blue components containing $v$. If instead $B' = B$ then all vertices in $A \setminus A'$ are complete blue to $B$ and therefore may be covered with a blue connected component. That blue component along with the red component containing $v$ forms the desired cover. Otherwise, we may cover the graph with $2$ connected blue components,  $G[(A \setminus A') \cup B']$ and $G[A' \cup (B \setminus B')]$, both of which induce a complete blue bipartite graph.

\section{Upper Bound - Proof of Theorem \ref{theorem tripartite upper bound}}\label{sec2}
In order to prove Theorem \ref{theorem tripartite upper bound}, we will first show in Section \ref{section reduction} that any counterexample to the theorem must have some very specific structure, and then in Section \ref{section the final case} we explore this structure and show that we can find a suitable monochromatic subgraph cover.

First note that if our complete multipartite graph has a part of size $1$, then this vertex is the center of a blue star and a red star (one of these may be trivial), which cover the entire graph, so we have a cover with two subgraphs of diameter at most 2 in this case. Now, let $a,b,c\geq 2$ be fixed, and assume to the contrary that Theorem \ref{theorem tripartite upper bound} does not hold for $G:=K_{a,b,c}$. Let $A,B$ and $C$ be the partite sets of $G$ with $|A|=a$, $|B|=b$ and $|C|=c$. Let $\chi:E(G)\to \{\text{red},\text{blue}\}$ be a $2$-coloring of the edges of $G$ where $\chi$ has the property that there do not exist two monochromatic subgraphs of $G$ of diameter at most $3$ that cover $V(G)$. Let $G_{red}$ and $G_{blue}$ be the spanning subgraphs of $G$ containing all the red edges and all the blue edges respectively, and define $N_{red}(x) := N_{G_{red}}(x), N_{blue}(x) := N_{G_{blue}}(x), d_{red}(x) := d_{G_{red}}(x),$ and $d_{red}(x) := d_{G_{red}}(x)$.

Throughout this section, we will use the following result:

\begin{lemma}[Lemma 4.18(P1) in \cite{Grace}]\label{lemma prev paper dom vtx}
	If all edges incident to a single vertex in a two-edge coloring of a complete bipartite graph are all colored the same, then there is a monochromatic subgraph cover with a star and a double star.
\end{lemma}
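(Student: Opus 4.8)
The plan is to exhibit the required cover explicitly, hinging both monochromatic subgraphs on one common vertex. By symmetry of the two colors and the two parts, I may assume the distinguished vertex $v$ lies in the part $A$ of the complete bipartite graph $G=(A,B)$ and that every edge at $v$ is red; thus $v$ is red-complete to $B$. I would fix an arbitrary vertex $b^* \in B$ and take the following two subgraphs: the red double star $D$ with centers $v$ and $b^*$ (the edge $vb^*$ is red since $v$ is red-complete to $B$), whose leaves at $v$ are all of $B\setminus\{b^*\}$ and whose leaves at $b^*$ are the red neighbors of $b^*$ in $A$; and the blue star $S$ centered at $b^*$, whose leaves are the blue neighbors of $b^*$ in $A$.

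First I would record what each piece covers. The subgraph $D$ is a genuine monochromatic double star, hence connected of diameter at most $3$, and since $v$ is red-complete to $B$ it covers $\{v\}\cup B\cup (N_{red}(b^*)\cap A)$. The blue star $S$ covers $\{b^*\}\cup(N_{blue}(b^*)\cap A)$. Then I would verify that $D\cup S$ covers every vertex: all of $B$ is already covered by $D$, and on the $A$ side we have
\[
(N_{red}(b^*)\cap A)\cup(N_{blue}(b^*)\cap A)=N(b^*)\cap A=A,
\]
because $G$ is complete bipartite, so $b^*$ is adjacent to every vertex of $A$ and each such edge is either red or blue. Together with $v\in D$, this shows $A\subseteq V(D)\cup V(S)$, completing the cover.

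The only points needing a word of care are degenerate cases, and none is a real obstacle. If $b^*$ has no blue neighbor in $A$, then $S$ degenerates to the single vertex $b^*$, which is already covered by $D$ (in this case $N_{red}(b^*)\cap A=A$, so $D$ covers all of $A$); and if $A=\{v\}$, the red star at $v$ already suffices. One should also confirm that $D$ is a valid double star even when $b^*$ has no red neighbor in $A$ besides $v$, in which case $D$ is simply the red star at $v$ together with the extra edge $vb^*$. Since the construction succeeds for \emph{every} choice of $b^*\in B$, there is essentially no hard step here: the argument is a direct verification once the two monochromatic pieces are pinned to the common vertex $b^*$.
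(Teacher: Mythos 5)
Your proposal is correct. Note, however, that this paper does not actually prove the statement: it is quoted verbatim as Lemma 4.18(P1) from the reference \cite{Grace}, so there is no in-paper proof to compare against. Your construction --- pinning a red double star on $v$ and an arbitrary $b^*\in B$, and mopping up the rest of $A$ with the blue star at $b^*$ --- is the natural argument and is exactly the kind of cover the lemma asserts (the red double star has centers $v,b^*$ and is monochromatic because $v$ is red-complete to $B$; every remaining vertex of $A$ is blue-adjacent to $b^*$ since the graph is complete bipartite). Your handling of the degenerate cases is also fine, since a star or a single vertex is an acceptable degenerate double star. The only cosmetic remark is that $v$ should not be listed among the leaves of $b^*$ in $D$, as it is already a center; this does not affect the validity of the cover.
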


As noted in \cite{Grace}, the only case in which we do not know that a 2-colored complete bipartite graph has a monochromatic cover of diameter at most 3 occurs when both the red and blue spanning subgraphs are connected and of diameter exactly 4. This implies the following:

\begin{proposition}(\cite{Grace})\label{proposition diam4}
	Both graphs $G_{red}$ and $G_{blue}$ have diameter $4$.
\end{proposition}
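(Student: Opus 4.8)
The plan is to reduce the tripartite problem to the bipartite theorem of \cite{Grace} by amalgamating two of the three parts. First I would merge $A$ and $B$ and consider the complete bipartite graph $G[A\cup B,C]$ with the coloring inherited from $\chi$; this really is complete bipartite, since in $K_{a,b,c}$ every vertex of $A\cup B$ is adjacent to every vertex of $C$. The key observation is that a monochromatic cover of $G[A\cup B,C]$ by two subgraphs of diameter at most $3$ would also be such a cover of $G$: each subgraph uses only edges of $G[A\cup B,C]\subseteq G$, its diameter is an intrinsic quantity unaffected by the ambient graph, and together the two subgraphs cover $A\cup B\cup C=V(G)$. Since $\chi$ admits no monochromatic cover of $G$ by two subgraphs of diameter at most $3$, the graph $G[A\cup B,C]$ admits none either, so by the quoted bipartite fact both its red and its blue spanning subgraphs are connected and of diameter exactly $4$.

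With this dichotomy in hand, both bounds follow quickly. For the upper bound, the red spanning subgraph of $G[A\cup B,C]$—consisting of all red edges of $G$ between $A\cup B$ and $C$—is a spanning subgraph of $G_{red}$; since it is already connected with diameter $4$ and adding the remaining red edges (those inside $A\cup B$) can only shrink distances, $G_{red}$ is connected with $\mathrm{diam}(G_{red})\le 4$, and likewise $\mathrm{diam}(G_{blue})\le 4$. For the lower bound, suppose $\mathrm{diam}(G_{red})\le 3$. Then $G_{red}$ is by itself a single connected monochromatic spanning subgraph of diameter at most $3$, so pairing it with a trivial second subgraph (a single vertex, of diameter $0$) yields a monochromatic cover of $G$ by two subgraphs of diameter at most $3$, contradicting the standing assumption; hence $\mathrm{diam}(G_{red})\ge 4$. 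The same argument gives $\mathrm{diam}(G_{blue})\ge 4$, so $\mathrm{diam}(G_{red})=\mathrm{diam}(G_{blue})=4$.

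I expect the only real content to be the first reduction step: spotting that two parts can be amalgamated into one side of a complete bipartite graph so that the bipartite result applies, and checking that covers transfer between the two settings without changing diameters. After that the diameter bounds are essentially immediate. The points needing care are verifying that the lifted subgraphs are legitimately monochromatic subgraphs of $G$ of diameter at most $3$, and, in the lower bound, confirming that a trivial one-vertex subgraph is an admissible second member of the cover.
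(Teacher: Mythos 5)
Your proof is correct and takes essentially the same approach as the paper: the paper obtains this proposition directly from the quoted bipartite dichotomy of \cite{Grace}, leaving implicit exactly the steps you spell out (merging two parts into one side of a spanning complete bipartite subgraph, transferring covers between that subgraph and $G$, noting that the extra red/blue edges can only shrink distances, and ruling out diameter at most $3$ by using the spanning color class itself together with a trivial second subgraph). The paper compresses all of this into ``This implies the following,'' so your write-up is simply a fleshed-out version of the same argument.
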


Throughout the rest of this section, we will fix a vertex $v\in V(G)$ such that $\mathrm{ecc}_{G_{red}}(v) = 4$. We may assume without loss of generality that $v\in A$. 

We now will partition $A$, $B$ and $C$ based on red-distances from the vertex $v$. For $1\leq i\leq 4$, let $A_i\subseteq A$, $B_i\subseteq B$ and $C_i\subseteq C$ be the sets of vertices that are red-distance exactly $i$ from $v$ in each of our three partite sets. This gives us the following partitions:
\begin{itemize}
	\item $A = \{v\} \cup A_2 \cup A_3 \cup A_{4}$,
	\item $B = B_1 \cup B_2 \cup B_3 \cup B_{4}$, and
	\item $C = C_1 \cup C_2 \cup C_3 \cup C_{4}$,
\end{itemize}
where $A_1$ is omitted since $A_1=\emptyset$ trivially.
%%%%%%%%%%%%%%%%%%%%%%%%%%%%%%%%%%%%%%%%%%%%%%%%%%%%%%%%%%%%%%%%%%%%%%%%%%%%%%%%%%%%%
%%%%%%%%%%%%%%%%%%%%%%%%%%%%%%%%%%%%%%%%%%%%%%%%%%%%%%%%%%%%%%%%%%%%%%%%%%%%%%%%%%%%%
%%%%%%%%%%%%%%%%%%%%%%%%%%%%%%%%%%%%%%%%%%%%%%%%%%%%%%%%%%%%%%%%%%%%%%%%%%%%%%%%%%%%%
%%%%%%%%%%%%%%%%%%%%%%%%%%%%%%%%%%%%%%%%%%%%%%%%%%%%%%%%%%%%%%%%%%%%%%%%%%%%%%%%%%%%%
\subsection{Reducing the Problem}\label{section reduction}

Many of the sets in the above partitions of $A$, $B$ and $C$ may be empty. The goal of this section is to reduce the problem down to a case where we know exactly which sets are empty and which are non-empty. More specifically, we will show that we may focus on the case when $\{v\}, A_3,B_2,B_4,C_2$ and $C_4$ are all empty, while all the other sets in our partitions are non-empty.

\begin{claim}\label{claim sets at distance 1}
	$B_1, C_1\neq \emptyset$.
\end{claim}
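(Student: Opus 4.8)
The plan is to argue by contradiction, handling the two sets separately (they are symmetric). I would assume one of them is empty---say $C_1=\emptyset$---and derive a contradiction with the standing assumption that $\chi$ admits no monochromatic cover of $V(G)$ by two subgraphs of diameter at most $3$. Since $C_1$ is exactly the set of red neighbours of $v$ lying in $C$, the hypothesis $C_1=\emptyset$ says precisely that every edge from $v$ to $C$ is blue; that is, $v$ is blue-complete to $C$.

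The key observation is that, although $G$ is tripartite, I may merge two of its parts into a single side of a complete bipartite graph. Concretely, the bipartite graph $G[A\cup B,\,C]$ is complete bipartite, since every vertex of $A$ and every vertex of $B$ is adjacent in $G$ to every vertex of $C$ (the $A$--$B$ edges are simply ignored in this auxiliary graph, which is harmless). In $G[A\cup B,\,C]$ the vertex $v$ has all of its incident edges blue, so $v$ is a vertex all of whose edges receive one colour, and Lemma~\ref{lemma prev paper dom vtx} applies.

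Applying that lemma produces a monochromatic cover of the vertex set $(A\cup B)\cup C=V(G)$ consisting of a star and a double star. A star has diameter at most $2$ and a double star has diameter at most $3$, and both are subgraphs of $G$ (they use only edges incident to $C$). I thus obtain two monochromatic subgraphs of diameter at most $3$ covering all of $V(G)$, contradicting the choice of $\chi$; hence $C_1\neq\emptyset$. The argument for $B_1$ is identical after interchanging the roles of $B$ and $C$: assuming $B_1=\emptyset$ makes $v$ blue-complete to $B$, and Lemma~\ref{lemma prev paper dom vtx} applied to the complete bipartite graph $G[A\cup C,\,B]$ again yields a forbidden cover.

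The only real content is the merging trick: once one notices that two of the three parts can be combined into one side of a complete bipartite graph on which $v$ is a single-coloured (dominant) vertex, the previously established lemma does all the work and, crucially, covers \emph{every} vertex of that bipartite graph at once. I do not anticipate a genuine obstacle; the single point to verify carefully is that the star/double-star cover guaranteed by Lemma~\ref{lemma prev paper dom vtx} spans the entire vertex set $(A\cup B)\cup C$ of the auxiliary graph, and therefore all of $V(G)$, rather than only a part of it.
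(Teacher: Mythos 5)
Your proof is correct and is essentially the paper's own argument: the paper likewise observes that if $v$ dominated a partite set (say $C$) in one colour, then Lemma~\ref{lemma prev paper dom vtx} applied to the complete bipartite graph between $C$ and the rest of the graph would yield a star/double-star cover of all of $V(G)$ with diameter at most $3$, a contradiction. The merging of two parts into one side of a complete bipartite graph, and the check that the resulting cover spans everything, is exactly the paper's reasoning.
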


\begin{proof}
	First note by Lemma \ref{lemma prev paper dom vtx} that no vertex of $G$ dominates an entire partite set in a single color, otherwise the bipartite subgraph between the dominated partite set and the rest of the graph would have a cover with a star and a double star. Therefore, there must exist a vertex in $B$ that is adjacent to $v$ in red, and also a vertex in $C$ that is adjacent to $v$ in red, giving us that $B_1\neq \emptyset$ and $C_1\neq \emptyset$.
\end{proof}

We now deal with all vertices at red-distance 4 from $v$.

\begin{claim}\label{claim sets at distance 4}
	$A_{4}\neq \emptyset$ and $B_{4}= C_{4}=\emptyset$.
\end{claim}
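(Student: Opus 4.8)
The plan is to prove the two assertions separately, first establishing $B_4=C_4=\emptyset$ and then reading off $A_4\neq\emptyset$ almost for free. Since $v\in A$ is distinguished but $B$ and $C$ are interchangeable in the whole setup, it suffices to rule out a vertex of $B_4$; the argument for $C_4$ is identical after swapping the roles of $B$ and $C$. So I would assume, toward a contradiction, that there is a vertex $w\in B_4$, and then build a cover of $V(G)$ by two blue subgraphs of diameter at most $3$, contradicting the choice of $\chi$.

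First I would record the structure forced by $w\in B_4$. Since $w$ is at red-distance $4$ from $v$, every red-neighbor of $w$ is at red-distance $3$ or $4$; as $w\in B$, these lie in $A_3\cup A_4\cup C_3\cup C_4$. Equivalently, $w$ is blue-adjacent to all of $\{v\}\cup A_2\cup C_1\cup C_2$ (everything in $A\cup C$ at red-distance at most $2$), and in particular $vw$ is blue. Meanwhile $v$, whose red-neighborhood is exactly $B_1\cup C_1$, is blue-adjacent to all of $B_{\ge 2}\cup C_{\ge 2}$. The natural first subgraph is then the blue double-star with centers $v$ and $w$, which has diameter at most $3$ and covers $\{v,w\}\cup N_{blue}(v)\cup N_{blue}(w)$.

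Next I would compute exactly what this double-star misses. Writing $R:=N_{red}(w)\cap A\subseteq A_3\cup A_4$, a short check shows the uncovered set is precisely $B_1\cup R$: all of $C$ is caught because $v$ is blue-complete to $C_{\ge 2}$ while $w$ is blue-complete to $C_1\cup C_2$; all of $B$ except $B_1$ is caught by $v$; and every vertex of $A$ except $R$ is caught, since $w$ is blue-adjacent to $\{v\}\cup A_2$ and to each vertex of $A_3\cup A_4$ outside $R$. (If $R=\emptyset$ then $w$ would be blue-complete to $A$, contradicting Lemma~\ref{lemma prev paper dom vtx}, so in fact $R\neq\emptyset$; this is not strictly needed but keeps the second subgraph genuinely bipartite.)

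The crux is to cover the leftover $B_1\cup R$ by a single monochromatic subgraph of diameter at most $3$, and here the key observation makes everything collapse: no edge between $B_1$ and $R$ can be red. Indeed a vertex of $B_1$ is at red-distance $1$ from $v$ while a vertex of $R$ is at red-distance at least $3$, so a red edge joining them would force the $R$-vertex to red-distance at most $2$, a contradiction. Hence every edge between $B_1$ and $R$ is blue, and since $G$ is complete multipartite these two nonempty sets induce a complete blue bipartite graph, which is connected of diameter $2$. Taking this as the second subgraph produces two blue subgraphs of diameter at most $3$ covering $V(G)$, the desired contradiction; therefore $B_4=\emptyset$, and symmetrically $C_4=\emptyset$. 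Finally, since $\mathrm{ecc}_{G_{red}}(v)=4$ the layer $A_4\cup B_4\cup C_4$ is nonempty, so with $B_4=C_4=\emptyset$ we get $A_4\neq\emptyset$. I expect the only fiddly part to be the bookkeeping that the double-star misses exactly $B_1\cup R$; the distance argument excluding red edges between $B_1$ and $R$ is the real engine and is very short.
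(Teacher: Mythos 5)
Your proof is correct, and it runs on the same engine as the paper's---every edge joining vertices whose red-distances from $v$ differ by at least $2$ is blue---but the second subgraph in your cover is genuinely different. The paper takes the same first subgraph (the blue double star with centers $v$ and $u_4\in B_4$), but for the remaining vertices it uses a \emph{second blue double star}, centered at a vertex $u_1\in B_1$ (nonempty by Claim~\ref{claim sets at distance 1}) and at a red-neighbor $u_3\in A_3\cup C_3$ of $u_4$, whose existence is automatic from a shortest red $v$--$u_4$ path; that double star covers $B_1$ together with all of $A_3\cup A_4\cup C_3\cup C_4$, so the paper never needs to identify the uncovered set exactly. You instead compute the leftover precisely as $B_1\cup R$ with $R=N_{red}(w)\cap A$ and cover it with the complete blue bipartite graph $G[B_1,R]$; this second graph is simpler (complete bipartite, diameter $2$), but the price is that you must rule out $R=\emptyset$, which costs an extra application of Lemma~\ref{lemma prev paper dom vtx}, and you also implicitly need $B_1\neq\emptyset$, which deserves an explicit citation of Claim~\ref{claim sets at distance 1} (if $B_1=\emptyset$, your second graph degenerates as well). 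One parenthetical remark of yours is inaccurate: ensuring $R\neq\emptyset$ is \emph{not} optional, since if $R=\emptyset$ and $|B_1|\geq 2$ then $G[B_1,R]$ is edgeless, hence disconnected, and cannot serve as a single bounded-diameter subgraph covering $B_1$; fortunately you do supply the needed argument (and when $R=\emptyset$ one could alternatively finish with the red star at $v$, which contains $B_1$), so the proof stands as written.
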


\begin{proof}
	First suppose $B_4 \neq \emptyset$, and let $u_4 \in B_4$. The existence of $u_4$ immediately implies that there exists a vertex $u_3 \in A_3 \cup C_3$. Now by claim \ref{claim sets at distance 1} we know that $B_1 \ne \emptyset$, so let $u_1 \in B_1$. Note that the edges $vu_4$ and $u_1u_3$ are both blue since $d_{red}(v,u_4)= 4$ and $d_{red}(u_1,u_3)\geq 2$. Let $S_1$ be the largest blue double star with centers $v$ and $u_4$ and let $S_2$ be the largest blue double star with centers $u_1$ and $u_3$. 
	
	We will show that $S_1$ and $S_2$ form a monochromatic subgraph cover of diameter at most $3$, which will give us a contradiction. Indeed, every vertex in $B_1$ is a blue neighbor of $u_3$, while $B\setminus B_1\subseteq N_{blue}(v)$, so $B\subseteq V(S_1)\cup V(S_2)$. Furthermore, every vertex in $A_3\cup A_{4} \cup C_3\cup C_{4}$ is blue-adjacent to $u_1$, while every vertex in $\{v\}\cup A_2\cup C_1\cup C_2$ is blue-adjacent to $u_4$, giving us that $A,C\subseteq V(S_1)\cup V(S_2)$. This gives us the desired contradiction, so we must have that $B_{4}=\emptyset$. By symmetry $C_{4}=\emptyset$. Since $v$ has eccentricity at least $4$ in $G_{red}$, we must have $A_{4}\neq \emptyset$, completing the proof.
\end{proof}

Next, we consider $B_3$ and $C_3$. 

\begin{claim}\label{claim sets at distance 3}
	$B_3, C_3 \ne \emptyset$.
\end{claim}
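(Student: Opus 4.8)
The plan is to prove the claim by contradiction against the standing hypothesis of this section (that no two monochromatic subgraphs of $G$ of diameter at most $3$ cover $V(G)$), showing directly that $B_3$ and $C_3$ are both nonempty. Since the roles of $B$ and $C$ are symmetric in everything established so far, it suffices to treat $B_3$, so I would suppose toward a contradiction that $B_3=\emptyset$.

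The one tool needed is the elementary breadth-first layering property of $G_{red}$: if an edge $xy$ is red then $|d_{red}(v,x)-d_{red}(v,y)|\le 1$, and hence any edge joining two vertices whose red-distances from $v$ differ by at least $2$ is forced to be blue. Combining $B_3=\emptyset$ with $B_4=\emptyset$ from Claim~\ref{claim sets at distance 4} gives $B=B_1\cup B_2$, so every vertex of $B$ lies at red-distance $1$ or $2$ from $v$. Now choose any $w\in A_4$, which is nonempty by Claim~\ref{claim sets at distance 4}. Since $d_{red}(v,w)=4$ differs from both $1$ and $2$ by at least $2$, every edge from $w$ to $B$ must be blue, i.e.\ $w$ dominates the entire part $B$ in blue.

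This is precisely the forbidden configuration handled in the proof of Claim~\ref{claim sets at distance 1}: applying Lemma~\ref{lemma prev paper dom vtx} to the complete bipartite graph $G[B,A\cup C]$, with the monochromatic vertex $w$ on the $A\cup C$ side, yields a monochromatic cover of that bipartite graph by a star and a double star, of diameters at most $2$ and $3$ respectively. Because $G[B,A\cup C]$ is spanning, with vertex set $A\cup B\cup C=V(G)$, this cover is in fact a cover of all of $G$ by two monochromatic subgraphs of diameter at most $3$, contradicting the standing assumption. Hence $B_3\neq\emptyset$, and the identical argument with $B$ and $C$ interchanged gives $C_3\neq\emptyset$.

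I do not anticipate a real obstacle: the claim collapses as soon as one observes that $A_4$ is nonempty while $B_3$ and $B_4$ are both empty, which pins every $A_4$-vertex to be blue-complete to $B$. The only point that deserves a sentence of care is confirming that the star/double-star cover produced by Lemma~\ref{lemma prev paper dom vtx} covers \emph{every} vertex of $G$ rather than just the bipartite graph examined in isolation, and this is immediate because that bipartite graph is spanning.
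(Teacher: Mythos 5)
Your proof is correct and takes essentially the same approach as the paper: both arguments hinge on taking a vertex of $A_4$ (nonempty by Claim~\ref{claim sets at distance 4}), observing via the red-distance layering that its only possible red neighbors in $B$ lie in $B_3$, and invoking Lemma~\ref{lemma prev paper dom vtx} on the bipartite graph between $B$ and $A\cup C$ to rule out its being blue-complete to $B$. The only difference is presentational—you argue by contradiction where the paper uses the contrapositive of the lemma directly.
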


\begin{proof}
	By Claim \ref{claim sets at distance 4}, we must have a vertex $x\in A_{4}$. Note that $N_{red}(x)\subseteq B_3\cup C_3$. The contrapositive of Lemma \ref{lemma prev paper dom vtx} applied to $G[A \cup C, B]$ implies that $N_{red}(x)\cap B_3\neq \emptyset$. In particular, $B_3\neq \emptyset$, and by a symmetric argument, we can also conclude that $C_3\neq \emptyset$. 
\end{proof}

Before we show that $A_3=\emptyset$, we deal with all vertices at red-distance 2 from $v$.

\begin{claim}\label{claim sets at distance 2}
	$A_2\neq \emptyset$ and $B_2=C_2=\emptyset$.
\end{claim}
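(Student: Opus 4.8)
The plan is to prove the two assertions in the opposite order: first $B_2=C_2=\emptyset$, and then deduce $A_2\neq\emptyset$ for free. Indeed, by Claim~\ref{claim sets at distance 4} there is a vertex $x\in A_4$, and a shortest red path from $v$ to $x$ has length $4$, so it meets a vertex at red-distance exactly $2$; once we know $B_2=C_2=\emptyset$ this vertex must lie in $A_2$, whence $A_2\neq\emptyset$.

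Throughout I will use the layering facts: if two vertices in different parts have red-distances from $v$ differing by at least $2$, the edge between them is blue; $v$ is red-complete to $B_1\cup C_1$ and blue-complete to $B_2\cup B_3\cup C_2\cup C_3$; and every $x\in A_4$ is blue-complete to $B_1\cup B_2\cup C_1\cup C_2$, with $N_{red}(x)\subseteq B_3\cup C_3$. The essential difficulty is the set $A_2$. In Claim~\ref{claim sets at distance 4} a vertex of $B_4$ was blue-complete to $A_2$ (their red-distances differ by $2$) and so could be used to cover it; but now $B_4=C_4=\emptyset$, so \emph{no} vertex of $B\cup C$ is forced to be blue-complete to $A_2$, and in fact $A_2$ has no forced blue neighbour at all. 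Hence the argument must cover $A_2$ indirectly.

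To show $B_2=\emptyset$, suppose $w\in B_2$; the goal is to build two monochromatic subgraphs of diameter at most $3$ covering $V(G)$, contradicting the choice of $\chi$. The key device is that $w\in B$ is adjacent to \emph{every} vertex of $A\cup C$, so each such vertex is a blue- or a red-neighbour of $w$. Thus if I take a blue double star $S_{\mathrm{blue}}$ with one centre $w$ (its leaves including all blue-neighbours of $w$) and a red double star $S_{\mathrm{red}}$ with one centre $w$ (its leaves including all red-neighbours of $w$), then $A\cup C\subseteq V(S_{\mathrm{blue}})\cup V(S_{\mathrm{red}})$; in particular $A_2$ is covered. It then remains to cover $B\setminus\{w\}$, and since the $w$-leaves of each double star lie in $A\cup C$, this must be done by the two secondary centres: writing $\alpha$ (blue-adjacent to $w$) and $\beta$ (red-adjacent to $w$) for them, I need $B\subseteq\{w\}\cup N_{blue}(\alpha)\cup N_{red}(\beta)$. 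Here I would use that both $v$ and any $x\in A_4$ are blue-adjacent to $w$: taking $\alpha=v$ covers $B_2\cup B_3$ in blue and leaves $B_1$, while taking $\alpha=x\in A_4$ covers $B_1\cup B_2$ and leaves $B_3$; in either case the leftover block must be red-covered by a single red-neighbour $\beta$ of $w$.

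I expect this last step to be the main obstacle. By Lemma~\ref{lemma prev paper dom vtx} no single vertex dominates the whole part $B$ in one colour, so $B$ cannot be covered by one star, and the crux is to locate a red-neighbour $\beta$ of $w$ that is red-complete to whichever of $B_1,B_3$ remains. I anticipate that this needs a short case analysis on the colours of the edges joining the red-neighbours of $w$ to $B_1$ and to $B_3$ (for instance, the red predecessor of $w$ in $C_1$ is automatically blue-complete to $B_3$, making it a natural candidate for red-covering $B_1$), together with the possibility that the failure to find such a $\beta$ forces enough blue structure on $B_1$ or $B_3$ to yield a different admissible pair of double stars. Once $B_2=\emptyset$ is established, $C_2=\emptyset$ follows by the identical argument with the roles of $B$ and $C$ interchanged, and $A_2\neq\emptyset$ follows as explained above.
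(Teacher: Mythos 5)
Your deduction of $A_2\neq\emptyset$ from $B_2=C_2=\emptyset$ is fine, and making $w$ a common centre of a blue and a red double star so that $A\cup C$ is covered automatically is a legitimate idea. But the step you yourself flag as ``the main obstacle'' is not a deferred detail --- it is the entire content of the claim, and in the framework you set up it cannot be completed from the facts you invoke. You need a vertex $\beta\in N_{red}(w)$ that is red-complete to whichever block of $B$ the blue centre $\alpha$ misses, and nothing established so far (the layering, Claims~\ref{claim sets at distance 1}, \ref{claim sets at distance 4}, \ref{claim sets at distance 3}, Lemma~\ref{lemma prev paper dom vtx}) guarantees one. Concretely, take $A=\{v,a_2,a_4\}$, $B=\{b_1,b_2,b_3\}$, $C=\{c_1,c_3\}$, colour the edges $vb_1$, $vc_1$, $b_2c_1$, $a_2b_1$, $a_2c_1$, $a_2b_3$, $a_2c_3$, $a_4b_3$, $b_3c_3$ red and all remaining edges blue. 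The red-distances from $v$ match the subscripts, so $w:=b_2\in B_2$, and the conclusions of Claims~\ref{claim sets at distance 1}, \ref{claim sets at distance 4} and~\ref{claim sets at distance 3} all hold. Here $N_{red}(b_2)=\{c_1\}$, so $\beta=c_1$ is forced, yet $N_{red}(c_1)\cap\{b_1,b_3\}=\emptyset$; meanwhile every $\alpha\in N_{blue}(b_2)=\{v,a_2,a_4,c_3\}$ misses one of $b_1,b_3$ in blue ($v$ and $a_2$ miss $b_1$; $a_4$, $c_3$ and $a_2$ miss $b_3$). So no pair of double stars through $w$ covers $B$, and the ``short case analysis'' you anticipate has nothing to work with. (This colouring is not a counterexample to the theorem --- its blue graph has diameter $3$ --- but it satisfies every property your argument uses, so any repair would have to bring in further counterexample properties such as Proposition~\ref{proposition diam4} in an essential way.)

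There is also a circularity lurking in your fallback hope: the statement that some red neighbour of $w$ in $C_1$ is red-complete to $B_1$ is essentially Claim~\ref{claim red complete BC}, which the paper can only prove \emph{after} the present claim, because its proof uses the blue $C_6$-blow-up that exists only once $A_3$, $B_2$, $C_2$ are known to be empty. The paper's proof of the present claim sidesteps the need for any second red centre: the layered blue graph $H:=G_{blue}[V(G)\setminus(A_2\cup A_3)]$ already has diameter $3$ and contains all of $B\cup C$, the vertex $w\in B_2$ has eccentricity $2$ in $H$, so $H':=G_{blue}[V(H)\cup N_{blue}(w)]$ still has diameter at most $3$, and the single red star at $w$ then absorbs $A\setminus V(H')\subseteq N_{red}(w)$. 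The structural lesson is that the vertices left over for the red subgraph must be confined to $A$ (all adjacent to $w$), not to $B$ (none adjacent to $w$); your construction arranges exactly the opposite.
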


\begin{proof}
	First assume that $B_2\neq \emptyset$. Note that by definition, for all distinct $X,Y \in \{A,B,C\}$ and $i,j \in \{1,2,3,4\}$ with $|i-j| \ge 2$, $G[X_i,Y_j]$ is complete blue. Additionally, if $X \neq A$, and $i \in \{2,3,4\}$, $G[\{v\},X_i]$ is complete blue. These blue complete bipartite graphs are enough to guarantee that $H:=G_{blue}[V(G)\setminus(A_2\cup A_3)]$, as seen in Figure~\ref{H} has diameter $3$, regardless of whether $C_2$ is empty or not, and further that every vertex in $B_2$ has eccentricity $2$ in $H$.

	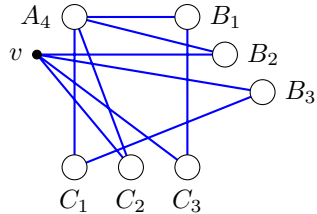
\begin{figure}[h]
		\centering
		\begin{tikzpicture}[scale=1]
				\draw[color=blue, thick] (0.5,1.5)--(1.75,0)--(1,2)--(3,1.5)--(0.5,1.5)--(2.5,0)--(2.5,2)--(1,2)--(1,0)--(3.5,1)--(0.5,1.5);
				
			\draw[color=black] (0.5,1.5) node[draw,shape=circle,fill=black,scale=0.35,label=left:{$v$}] {};
			\draw[color=black] (1,2) node[draw,shape=circle,fill=white,scale=1,label=left:{$A_4$}] {};
			\draw[color=black] (2.5,2) node[draw,shape=circle,fill=white,scale=1,label=right:{$B_1$}] {};
			\draw[color=black] (3,1.5) node[draw,shape=circle,fill=white,scale=1,label=right:{$B_2$}] {};
			\draw[color=black] (3.5,1) node[draw,shape=circle,fill=white,scale=1,label=right:{$B_3$}] {};
			\draw[color=black] (1,0) node[draw,shape=circle,fill=white,scale=1,label=below:{$C_1$}] {};
			\draw[color=black] (1.75,0) node[draw,shape=circle,fill=white,scale=1,label=below:{$C_2$}] {};
			\draw[color=black] (2.5,0) node[draw,shape=circle,fill=white,scale=1,label=below:{$C_3$}] {};

		\end{tikzpicture}
		\caption{The blue subgraph $H$, which has diameter 3}
		\label{H}
	\end{figure}

	Let $x\in B_2$ and define $H':=G_{blue}[V(H)\cup N_{blue}(x)]$. Since $x$ has eccentricity $2$ in $H$, $H'$ is a blue subgraph of $G$ of diameter $3$. Let $S$ be the largest red star in $G$ centered at $x$. Every vertex in $B \cup C$ is contained in $H'$, and $A \setminus V(H') \subseteq N_{red}(x)$. Thus $H'$ and $S$ give us a monochromatic subgraph cover of diameter at most $3$, a contradiction. Hence $B_2=\emptyset$, and a symmetric argument shows that $C_2=\emptyset$. Since $B_3$ is non-empty by Claim \ref{claim sets at distance 3}, there must exist a vertex at distance exactly $2$ from $v$, so we must have that $A_2$ is non-empty, finishing the proof of the claim.
\end{proof}

The final set to consider is $A_3$.

\begin{claim}
	$A_3 = \emptyset$.
\end{claim}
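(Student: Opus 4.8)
The plan is to derive a contradiction from the mere existence of a vertex in $A_3$, using nothing more than the layer structure of a red breadth-first search from $v$ together with the emptiness of the second layer outside $A$, which has already been established. Concretely, I would suppose toward a contradiction that $A_3\neq\emptyset$ and fix a vertex $w\in A_3$, so that $d_{red}(v,w)=3$ exactly.

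The key observation is the standard shortest-path property: every vertex at red-distance $d\geq 1$ from $v$ has a red neighbor at red-distance exactly $d-1$ (take the penultimate vertex of a shortest red $v$--$w$ path). Applying this to $w$, there is a red edge joining $w$ to some vertex $u$ with $d_{red}(v,u)=2$. Since $w\in A$ and $u$ is adjacent to $w$ in $G$, the vertex $u$ must lie outside $A$, i.e. $u\in B\cup C$; together with $d_{red}(v,u)=2$ this forces $u\in B_2\cup C_2$. But Claim~\ref{claim sets at distance 2} has already shown $B_2=C_2=\emptyset$, so no such $u$ exists. This contradiction yields $A_3=\emptyset$.

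Unlike the earlier claims in this section, this step needs no covering construction to reach its contradiction: all of that labor was absorbed into Claim~\ref{claim sets at distance 2}. The only point that requires any care — and the closest thing to an obstacle here — is verifying that the penultimate vertex $u$ cannot be quietly absorbed into the nonempty layer $A_2$. It cannot, precisely because $u$ is a $G$-neighbor of the $A$-vertex $w$ and hence lies in $B\cup C$; so a shortest red path reaching $A_3$ is compelled to route through the (now vacated) second layer of $B\cup C$, which is impossible. Once this is noted, the claim follows immediately.
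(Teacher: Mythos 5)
Your proof is correct and is essentially identical to the paper's: both arguments observe that a vertex $w\in A_3$ would need a red neighbor at red-distance $2$ from $v$, that such a neighbor must lie in $B_2\cup C_2$ since $w\in A$, and that Claim~\ref{claim sets at distance 2} makes this impossible. Your write-up merely makes explicit the shortest-path predecessor argument that the paper states in one sentence.
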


\begin{proof}
	 Any vertex in $A_3$ must have a red neighbor in $B_2 \cup C_2$ in order to have red-distance 3 from $v$. By Claim \ref{claim sets at distance 2}, $B_2$ and $C_2$ are empty. Therefore $A_3$ must also be empty.
\end{proof}

Thus, we have shown that the sets $\{v\}$, $A_2$, $A_4$, $B_1$, $B_3$, $C_1$, $C_3$ are all non-empty, while all the other sets in our partitions of $A,B$ and $C$ are empty.

%%%%%%%%%%%%%%%%%%%%%%%%%%%%%%%%%%%%%%%%%%%%%%%%%%%%%%%%%%%%%%%%%%%%%%%%%%%%%%%%%%%%%
%%%%%%%%%%%%%%%%%%%%%%%%%%%%%%%%%%%%%%%%%%%%%%%%%%%%%%%%%%%%%%%%%%%%%%%%%%%%%%%%%%%%%
%%%%%%%%%%%%%%%%%%%%%%%%%%%%%%%%%%%%%%%%%%%%%%%%%%%%%%%%%%%%%%%%%%%%%%%%%%%%%%%%%%%%%
%%%%%%%%%%%%%%%%%%%%%%%%%%%%%%%%%%%%%%%%%%%%%%%%%%%%%%%%%%%%%%%%%%%%%%%%%%%%%%%%%%%%%
\subsection{The Final Case of Theorem \ref{theorem tripartite upper bound}}\label{section the final case}
Based on Section \ref{section reduction}, we may assume that $A=\{v\}\cup A_2\cup A_{4}$, $B=B_1\cup B_3$ and $C=C_1\cup C_3$, and furthermore each of these sets are non-empty. We will now explore this particular case and show that there must exist a monochromatic subgraph cover of diameter at most $3$, which will conclude the proof of Theorem \ref{theorem tripartite upper bound}.

Note that, similar to the proof of Claim \ref{claim sets at distance 2}, $G[\{v\},B_3]$, $G[B_3,C_1]$, $G[C_1,A_{4}]$, $G[A_{4},B_1]$, $G[B_1,C_3]$ and $G[C_3,\{v\}]$ are all blue complete bipartite graphs due to the red-distance from $v$ of each set. This implies that $G':=G[V(G)\setminus A_2]$ has a spanning blue $C_6$-blowup, call it $C_6^+$, and thus has diameter at most $3$. Thus, our main goal of this section will be to show that the vertices in $A_2$ can either be added to $C_6^+$ without increasing the diameter, or can be included in a red subgraph of diameter $3$.

Towards this, we first explore which edges are red in $G'$. By definition, we have that $G[\{v\},B_1\cup C_1]$ is a red star. Our next claim gives us a collection of red edges in $G'$ that will help form a large red subgraph.

\begin{claim}\label{claim red complete BC}
	The red subgraph $G_{red}[B_1,C_1]$, is a complete bipartite graph.
\end{claim}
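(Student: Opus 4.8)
The plan is to show $G_{red}[B_1,C_1]$ is complete bipartite by supposing otherwise and deriving a contradiction, using the same double-star covering strategy that has driven the previous claims. So suppose there exist $b_1 \in B_1$ and $c_1 \in C_1$ with $b_1 c_1$ colored blue. I want to leverage this blue edge together with the known blue $C_6$-blowup structure and the red edges from $v$ to $B_1 \cup C_1$ to build two monochromatic subgraphs of diameter at most $3$ covering all of $V(G)$.

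The key structural facts I would assemble first are the ``long-distance'' blue complete bipartite graphs guaranteed by red-distance: $G[A_4, B_1]$, $G[A_4, C_1]$ (since $d_{red}(v, A_4)=4$ forces $A_4$ far from the distance-$1$ sets), $G[B_3, C_1]$, $G[B_1, C_3]$, and $G[\{v\}, B_3 \cup C_3]$ are all complete blue, as established at the start of Section~\ref{section the final case}. With the blue edge $b_1c_1$ in hand, I would form two blue double stars: one centered at $b_1$ and $c_1$, capturing their common and respective blue neighborhoods, and a second blue subgraph handling the remaining vertices. The vertex $b_1$ is blue-adjacent to $C_3$ and to $A_4$; the vertex $c_1$ is blue-adjacent to $B_3$ and to $A_4$; and through the blue edge $b_1c_1$ these combine into a single connected blue subgraph of diameter at most $3$ covering $A_4 \cup B_3 \cup C_3 \cup \{b_1,c_1\}$ and more.

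The step I expect to be the main obstacle is covering $\{v\} \cup A_2$ together with the vertices of $B_1 \cup C_1$ other than $b_1, c_1$. Here the natural move is to use the red edges: $v$ is red-adjacent to all of $B_1$ and $C_1$, so $G_{red}[\{v\}, B_1 \cup C_1]$ is a red star centered at $v$. The vertices of $A_2$ are at red-distance $2$ from $v$, so each has a red neighbor in $B_1 \cup C_1$, meaning a red double star or red subgraph rooted near $v$ can reach into $A_2$. The delicate part is ensuring that the blue subgraph containing $b_1 c_1$ genuinely has diameter at most $3$ (two double stars joined at an edge can have diameter up to $3$, so I must check the centers are within the right distance) and that every vertex of $B_1 \cup C_1$ lands in one of the two subgraphs; a vertex $b \in B_1$ not blue-adjacent to $c_1$ must be picked up by the red star at $v$, and I would need the non-edge $bc_1$ being red to be compatible with $b$ being covered.

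Concretely, I would argue that the red star centered at $v$ (reaching all of $B_1 \cup C_1$) extended by the red neighbors of its leaves into $A_2$ forms a red subgraph of diameter at most $3$ covering $\{v\} \cup A_2 \cup B_1 \cup C_1$, while the blue subgraph built from $b_1, c_1$ and the long-distance blue bipartite graphs covers $A_4 \cup B_3 \cup C_3$ (and $b_1, c_1$). Since $A = \{v\} \cup A_2 \cup A_4$, $B = B_1 \cup B_3$, and $C = C_1 \cup C_3$, these two subgraphs would cover all of $V(G)$ with diameter at most $3$, contradicting the choice of $\chi$. Thus no such blue edge $b_1 c_1$ exists, and $G_{red}[B_1, C_1]$ is complete bipartite.
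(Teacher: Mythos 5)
Your blue side is fine: the double star with centers $b_1,c_1$ joined by the blue edge $b_1c_1$ has diameter at most $3$ and covers $A_4\cup B_3\cup C_3$ together with every vertex sending a blue edge to $b_1$ or $c_1$. The gap is on the red side. The subgraph you propose --- the red star at $v$ with leaf set $B_1\cup C_1$, extended by attaching each vertex of $A_2$ to a red neighbor among those leaves --- does \emph{not} in general have diameter at most $3$. Take $u,w\in A_2$ whose only red neighbors in $B_1\cup C_1$ are single vertices $p$ and $p'$ respectively, with $p\neq p'$ and both in $B_1$. Inside your red subgraph the neighborhoods of $u$ and $w$ are the disjoint sets $\{p\}$ and $\{p'\}$ (note $u,w,v$ all lie in $A$, so $u$ and $w$ are adjacent to no $A$-vertex), and $pp'$ is not an edge of $G$ since $p,p'$ lie in the same partite set; hence $d_{red}(u,w)\geq 4$, realized by the path $u,p,v,p',w$. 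You cannot shortcut through a red edge of $G[B_1,C_1]$, because the existence of such edges is exactly what the claim asserts and cannot be assumed. Nothing in the reduction of Section~\ref{section reduction} excludes this configuration, so the step ``forms a red subgraph of diameter at most $3$'' fails.

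The missing idea is to let the blue subgraph absorb enough of $A_2$ that whatever remains is red-adjacent to a \emph{single} vertex, so a red star suffices and no diameter analysis is needed. This is how the paper argues: it keeps the entire blue $C_6^+$ (not just a double star), adds the blue edge $xy$, which drops $\mathrm{ecc}(x)$ to $2$, and attaches $N_{blue}(x)$ to obtain a blue graph of diameter at most $3$; every vertex outside this graph lies in $A_2\setminus N_{blue}(x)$ and is therefore red-adjacent to $x$, so the red star centered at $x$ completes the cover. Your construction can be repaired in the same spirit: any vertex of $A_2$ missed by your blue double star must be red-adjacent to both $b_1$ and $c_1$, so replace your red subgraph by the red double star with centers $v$ and $b_1$ (the edge $vb_1$ is red). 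It covers $\{v\}\cup B_1\cup C_1\cup (A_2\cap N_{red}(b_1))$, which together with your blue double star covers $V(G)$. As written, however, your proof has a genuine gap at its crucial step.
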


\begin{proof}
	Let $x\in B_1$ and $y\in C_1$. Assume to the contrary that the edge $xy$ is blue. Then $x$ has eccentricity $2$ in $C_6^++xy$, so $G^*:=G_{blue}[V(C_6^+)\cup N_{blue}(x)]$ has diameter at most $3$. Let $S$ be the largest red star with center $x$. Notice that $G^*$ and $S$ cover $G$ since the only vertices that are not in $G^*$ are in $A_2\setminus N_{blue}(x)$, which is contained in $V(S)$. This gives us a monochromatic subgraph cover of diameter at most $3$, a contradiction. Thus every edge in $G[B_1,C_1]$ is red.
\end{proof}

Note that using the same technique we can show that $G[B_3,C_3],$ $G[B_3,A_4],$ and $G[C_3,A_4]$ are complete red bipartite graphs as well, but only Claim \ref{claim red complete BC} is necessary to complete the proof of Theorem \ref{theorem tripartite upper bound}. We now define a partition of $A_2$ into two sets, one of which can be added to the blue subgraph containing $C_6^+$, and the other which can be covered with a red subgraph of diameter at most $3$. Let $A_{2,\text{red}}\subseteq A_2$ be the set of vertices, $x$, that satisfy at least one of the following properties:
\begin{itemize}
	\item[(P1)] $x$ has only red neighbors in $B_1$,\label{property B}
	\item[(P2)] $x$ has only red neighbors in $C_1$, or\label{propery C}
	\item[(P3)] $x$ has at least one red neighbor in each of $B_1$ and $C_1$.\label{Property B and C}
\end{itemize}
Let $A_{2,\text{blue}}:=A_2\setminus A_{2,\text{red}}$, and note that every vertex in $A_{2,\text{blue}}$ has at least one blue neighbor in each of $B_1$ and $C_1$, and also has only blue neighbors in at least one of $B_1$ or $C_1$. First we show that the vertices in $A_{2,\text{blue}}$ can be included in a blue subgraph containing the blue $C_6^+$.

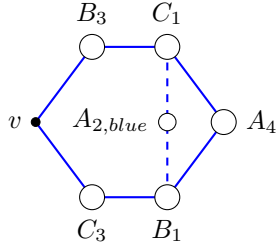
\begin{figure}[h]\label{blue}
	\centering
	\begin{tikzpicture}[scale=1]

		\draw[color=blue, thick] (-.25,1)--(0.5,0)--(1.5,0)--(2.25,1)--(1.5,2)--(0.5,2)--(-.25,1);
		\draw[color=blue,dashed, thick] (1.5,0)--(1.5,1)--(1.5,2);

		\draw[color=black] (-.25,1) node[draw,shape=circle,fill=black,scale=0.35,label=left:{$v$}] {};
		\draw[color=black] (0.5,0) node[draw,shape=circle,fill=white,scale=1,label=below:{$C_3$}] {};
		\draw[color=black] (0.5,2) node[draw,shape=circle,fill=white,scale=1,label=above:{$B_3$}] {};
		\draw[color=black] (1.5,0) node[draw,shape=circle,fill=white,scale=1,label=below:{$B_1$}] {};
		\draw[color=black] (1.5,1) node[draw,shape=circle,fill=white,scale=.7,label=left:{$A_{2,blue}$}] {};
		\draw[color=black] (1.5,2) node[draw,shape=circle,fill=white,scale=1,label=above:{$C_1$}] {};
		\draw[color=black] (2.25,1) node[draw,shape=circle,fill=white,scale=1,label=right:{$A_4$}] {};
		
	\end{tikzpicture}
	\caption{The blue subgraph $G_{blue}[V(C_6^+)\cup A_{2,\text{blue}}]$. Solid edges represent blue complete bipartite graphs. Dashed edges represent that every vertex in $A_{2,blue}$ is complete blue to one of $B_1$ and $C_1$, and has at least one blue neighbor in the other set.}
\end{figure}
    %A potential shorter (but less explicit) version to the ``Dashed edges..." part of the above caption: Dashed edges represent that no vertex in $A_{2,blue}$ satisfies any of (P1)-(P3).

\begin{claim}\label{claim blue subgraph diameter}
	The blue subgraph $G_{blue}[V(C_6^+)\cup A_{2,\text{blue}}]$ has diameter at most $3$.
\end{claim}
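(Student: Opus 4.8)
The plan is to use the blue $C_6$-blowup $C_6^+$ as a backbone. Recall that its six blue complete bipartite ``sides'' run $v$--$B_3$--$C_1$--$A_4$--$B_1$--$C_3$--$v$, so that $V(C_6^+)$ already induces a blue graph of diameter at most $3$; since the blue-distance between two fixed vertices never increases when new vertices are added to a graph, every pair inside $V(C_6^+)$ remains at blue-distance at most $3$ in $G_{blue}[V(C_6^+)\cup A_{2,\text{blue}}]$. It therefore suffices to bound, for each $x\in A_{2,\text{blue}}$, both (i) the blue-distance from $x$ to every class of $C_6^+$, and (ii) the blue-distance between $x$ and any other vertex of $A_{2,\text{blue}}$.

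Because the blue backbone and the defining property of $A_{2,\text{blue}}$ are invariant under interchanging the roles of $B$ and $C$ (that is, $B_1\leftrightarrow C_1$ and $B_3\leftrightarrow C_3$, which is an automorphism of the six-cycle above), I may assume for a fixed $x$ that it is complete blue to $B_1$ and has at least one blue neighbor $c_0\in C_1$. The easy directions are then all of blue-length at most $3$: $x$ reaches any vertex of $B_1$ in one step; any vertex of $B_3$ via $x$--$c_0$--$B_3$ (using that $G[C_1,B_3]$ is blue) in two steps; any vertex of $C_3$ via $x$--$B_1$--$C_3$ (using that $G[B_1,C_3]$ is blue) in two steps; any vertex of $A_4$ via $x$--$B_1$--$A_4$ in two steps; and $v$ via $x$--$B_1$--$C_3$--$v$ in three steps, all of whose edges lie on sides of $C_6^+$.

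The one genuinely delicate direction is reaching $C_1$, and this is where Claim~\ref{claim red complete BC} bites: since $G_{red}[B_1,C_1]$ is complete red, the natural two-step route $x$--$B_1$--$C_1$ is unavailable in blue, and $x$ need not be blue-adjacent to all of $C_1$. I would instead detour through $A_4$: for an arbitrary $c\in C_1$, pick $b\in B_1$ and $a\in A_4$ (both nonempty by the reduction) and use the blue path $x$--$b$--$a$--$c$, which is legal because $x$ is complete blue to $B_1$ and the sides $G[A_4,B_1]$ and $G[C_1,A_4]$ of $C_6^+$ are blue; this gives blue-distance at most $3$. Finally, for two vertices $x,x'\in A_{2,\text{blue}}$, at least one of them---say $x$---is complete blue to a class $X_1\in\{B_1,C_1\}$, while $x'$ has a blue neighbor $w$ there, so the path $x$--$w$--$x'$ gives blue-distance at most $2$. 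Combining these cases with the backbone estimate shows the diameter is at most $3$. The main obstacle is precisely the $C_1$ direction forced by Claim~\ref{claim red complete BC}; every other distance follows directly from the blue sides of $C_6^+$ together with the guaranteed blue neighbor of $x$ in each of $B_1$ and $C_1$.
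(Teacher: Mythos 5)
Your proof is correct and follows essentially the same route as the paper: you make the same per-vertex WLOG (complete blue to $B_1$, with a blue neighbor in $C_1$), use the same common-neighbor argument for pairs inside $A_{2,\text{blue}}$, and route all other pairs through the blue $C_6^+$ backbone. Your explicit detour $x$--$B_1$--$A_4$--$C_1$ is just an unfolding of the paper's observation that every vertex of $C_6^+$ outside $B_3$ lies within distance $2$ of $B_1$, so the two arguments coincide.
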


\begin{proof}
	We need only consider pairs of vertices $x$ and $y$ with at least one vertex in $A_{2,\text{blue}}$, say without loss of generality $x \in A_{2,\text{blue}}$. If $y$ is also in $A_{2,\text{blue}}$, then since $x$ has at least one neighbor in each of $B_1$ and $C_1$, and $y$ is adjacent to every vertex in either $B_1$ or $C_1$, $x$ and $y$ are at distance $2$ in $G_{blue}[V(C_6^+)\cup A_{2,\text{blue}}]$.
	
	Now, consider a pair with $x\in A_{2,\text{blue}}$ and $y\in V(C_6^+)$. Assume without loss of generality that $x$ has only blue neighbors in $B_1$, and let $z\in C_1$ be a blue neighbor of $x$. If $y\in B_3$, then $(x,z,y)$ is a blue path of length $2$. If $y\not\in B_3$, then $y$ is distance at most $2$ from any vertex in $B_1$, so distance at most $3$ from $x$, so in either case, $x$ and $y$ are at distance at most $3$, so $G_{blue}[V(C_6^+)\cup A_{2,\text{blue}}]$ has diameter at most $3$.
\end{proof}

The final step in the proof is to show that we can cover $A_{2,\text{red}}$ with a red subgraph.

\begin{figure}[h]\label{red}
	\centering
	\begin{tikzpicture}[scale=1]
		
		\draw[color=red, thick] (0,1)--(0,0)--(2,0)--(2,1);
		\draw[color=red,dotted, thick] (0,0)--(1,1)--(2,0);
		
		\draw[color=black] (0,0) node[draw,shape=circle,fill=white,scale=1,label=below:{$B_1$}] {};
		\draw[color=black] (2,0) node[draw,shape=circle,fill=white,scale=1,label=below:{$C_1$}] {};
		\draw[color=black] (0,1) node[draw,shape=circle,fill=white,scale=.7,label=above:{$X_1$}] {};
		\draw[color=black] (1,1) node[draw,shape=circle,fill=white,scale=.7,label=above:{$X_3$}] {};
		\draw[color=black] (2,1) node[draw,shape=circle,fill=white,scale=.7,label=above:{$X_2$}] {};

	\end{tikzpicture}
	\caption{The red subgraph $G_{red}[B_1\cup C_1\cup A_{2,\text{red}}]$. Solid edges represent complete bipartite graphs while dotted edges represent that vertices in $X_3$ have at least one red neighbor in each set $B_1$ and $C_1$.}
\end{figure}
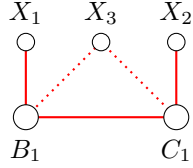

\begin{claim}\label{claim red subgraph diameter}
	The red subgraph $G_{red}[B_1\cup C_1\cup A_{2,\text{red}}]$ has diameter at most $3$. 
\end{claim}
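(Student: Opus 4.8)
The plan is to show that the red subgraph $G_{red}[B_1\cup C_1\cup A_{2,\text{red}}]$ is connected with small diameter by exploiting the red complete bipartite structure between $B_1$ and $C_1$ established in Claim~\ref{claim red complete BC}. First I would partition $A_{2,\text{red}}$ according to which of the three defining properties holds: let $X_1$ be the vertices satisfying (P1) (only red neighbors in $B_1$), let $X_2$ be those satisfying (P2) (only red neighbors in $C_1$), and let $X_3$ be those satisfying (P3) (at least one red neighbor in each of $B_1$ and $C_1$). A vertex may satisfy more than one property, so I would fix an assignment, e.g. placing a vertex in $X_3$ if it satisfies (P3), and otherwise into $X_1$ or $X_2$ according to whichever of (P1), (P2) applies. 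The key observation is that every vertex of $A_{2,\text{red}}$ has a red neighbor in $B_1\cup C_1$ by construction, so $B_1\cup C_1$ acts as a red ``core'' connecting everything.

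Next I would verify the diameter bound by checking all relevant pairs of vertices, using the fact that $G_{red}[B_1,C_1]$ is a complete bipartite graph (so any vertex in $B_1$ reaches any vertex in $C_1$ in one red step, and any two vertices within $B_1$ are at red-distance $2$ through any vertex of $C_1$, and symmetrically). For a vertex $x\in X_1$, its red neighbors lie entirely in $B_1$; to reach a vertex $y\in X_2$ whose red neighbors lie in $C_1$, I would route $x\to b\to c\to y$ where $b\in B_1$ is a red neighbor of $x$ and $c\in C_1$ is a red neighbor of $y$, using the red edge $bc$ from Claim~\ref{claim red complete BC}, giving red-distance at most $3$. For two vertices both in $X_1$, each has a red neighbor in $B_1$, and those two $B_1$-vertices are at red-distance $2$ through any vertex of $C_1$, so the distance is at most $4$ — here I would instead route through the complete bipartite structure more carefully: pick $b$ a red neighbor of the first vertex and $c\in C_1$, then $c$ is red-adjacent to every vertex of $B_1$, so the second vertex reaches $c$ via its own $B_1$-neighbor in two steps, and I must confirm this stays within diameter $3$.

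The main obstacle I anticipate is precisely the pair of vertices both lying in $X_1$ (or both in $X_2$), since their red neighborhoods are confined to the same side $B_1$, and the natural path $x\to b\to c\to b'\to x'$ has length $4$. To resolve this I would need to show that any two red neighbors $b,b'\in B_1$ are at red-distance exactly $2$ (via any common $C_1$ vertex, which exists since $C_1\neq\emptyset$ and the $B_1$–$C_1$ bipartite graph is complete red), and then argue that $x$ and $x'$ are at distance at most $3$ by a more economical routing, or alternatively show that $X_1$ vertices are mutually at distance $2$ through a shared $B_1$ neighbor only if such exists. I expect the cleanest argument treats $C_1$ (or $B_1$) as a dominating set in red: every vertex of $B_1\cup A_{2,\text{red}}$ reaches a fixed vertex $c_0\in C_1$ in at most $2$ red steps, and $c_0$ is red-adjacent to all of $B_1$, so the eccentricity of $c_0$ in the red subgraph is at most $2$, whence the diameter is at most $3$.

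Once the diameter bound is established, I would conclude the proof of Theorem~\ref{theorem tripartite upper bound} by combining this red subgraph from Claim~\ref{claim red subgraph diameter} with the blue subgraph $G_{blue}[V(C_6^+)\cup A_{2,\text{blue}}]$ from Claim~\ref{claim blue subgraph diameter}: since $A_2 = A_{2,\text{red}}\cup A_{2,\text{blue}}$ and every other vertex of $G$ lies in $V(C_6^+)$, these two monochromatic subgraphs of diameter at most $3$ together cover $V(G)$, contradicting our assumption that no such cover exists.
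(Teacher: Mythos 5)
There is a genuine gap here, and its source is a misreading of properties (P1) and (P2). In the paper, ``$x$ has only red neighbors in $B_1$'' means that \emph{every} edge from $x$ to $B_1$ is red, i.e.\ $x$ is complete red to $B_1$ --- not that the red neighbors of $x$ are confined to $B_1$. (This reading is forced by the surrounding text: it is exactly what makes $G_{red}[B_1\cup C_1\cup X_1\cup X_2]$ a spanning red blow-up of $P_4$, and it matches the complementary description of $A_{2,\text{blue}}$ as the vertices with a blue neighbor in each of $B_1$ and $C_1$.) Under the correct reading, the ``main obstacle'' you isolate --- a pair of vertices both in $X_1$ --- is no obstacle at all: both vertices are red-adjacent to every vertex of $B_1$, so they share a common red neighbor and are at red-distance $2$. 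Under your reading, the obstacle is real and unfixable: two vertices of $X_1$ with disjoint red neighborhoods inside $B_1$ would be at red-distance $4$ in $G_{red}[B_1\cup C_1\cup A_{2,\text{red}}]$, since any shorter path would need a red edge between two distinct vertices of $B_1$, which are non-adjacent as they lie in the same partite set. So the case you correctly flag as problematic is never actually resolved in your argument, and no ``more economical routing'' could resolve it with the hypotheses as you interpret them.

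Your fallback argument does not close the gap either: eccentricity at most $2$ at a \emph{single} vertex $c_0\in C_1$ only gives diameter at most $4$ by the triangle inequality, not $3$; moreover, under your reading of (P2) even the eccentricity claim fails, since an $X_2$ vertex whose sole red neighbor in $C_1$ is some $c\neq c_0$ has red-distance $3$ from $c_0$. What does suffice --- and is essentially the paper's proof --- is the pair of statements: (i) \emph{every} vertex of $B_1\cup C_1$ has eccentricity at most $2$ in $G_{red}[B_1\cup C_1\cup A_{2,\text{red}}]$, which holds because $X_1$ is complete red to $B_1$, $X_2$ is complete red to $C_1$, $G_{red}[B_1,C_1]$ is complete bipartite by Claim~\ref{claim red complete BC}, and each $X_3$ vertex has a red neighbor on each side; and (ii) every vertex of $A_{2,\text{red}}$ has a red neighbor in $B_1\cup C_1$. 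Then any pair involving a vertex $u\in A_{2,\text{red}}$ satisfies $d_{red}(u,v)\leq 1+2=3$ by first stepping from $u$ into $B_1\cup C_1$, and pairs inside $B_1\cup C_1$ are at distance at most $2$. The paper packages this as: $B_1\cup C_1\cup X_1\cup X_2$ spans a red $P_4$-blow-up of diameter $3$ in which $B_1\cup C_1$ has eccentricity $2$, and the $X_3$ vertices attach to it, with $X_3$--$X_3$ pairs handled exactly as you handle $X_1$--$X_2$ pairs, via a path $x\to b\to c\to y$ through the complete red bipartite graph.
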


\begin{proof}
	By Claim \ref{claim red complete BC}, $G_{red}[B_1,C_1]$ is complete bipartite. Let $X_i\subseteq A_2$ be the set of vertices satisfying property (P$i$) for $1\leq i\leq 3$. Then note that $G_{red}[B_1\cup C_1\cup X_1\cup X_2]$ has a spanning red $P_4$-blowup, call it $P_4^+$, which has diameter $3$. Thus, to show that $G_{red}[B_1\cup C_1\cup A_{2,\text{red}}]$ has the desired diameter, we only need to consider distances between pairs of vertices with at least one vertex in $X_3$. 
	
	Let $x\in X_3$ and $y\in B_1\cup C_1\cup A_{2,\text{red}}$. Since $\mathrm{ecc}_{P_4^+}(u)=2$ for any vertex $u\in B_1\cup C_1$, and $x$ has a red neighbor in $B_1\cup C_1$, $d_{red}(x,y) \leq 3$ if $y \in B_1\cup C_1\cup X_1\cup X_2$. If instead $y\in X_3$, then let $x'\in B_1$ be a red neighbor of $x$, and $y'\in C_1$ be a red neighbor of $y$. The path $(x,x',y',y)$ is a red path of length $3$, so $d_{red}(x,y) \leq 3$, finishing the proof.
\end{proof}

This completes the proof of Theorem \ref{theorem tripartite upper bound}; in Section \ref{section reduction}, we reduced the proof down to the case when $\{v\}, A_2, A_4, B_1, B_3, C_1$ and $C_3$ are the only non-empty sets in our original partition, and then via Claim \ref{claim blue subgraph diameter} and Claim \ref{claim red subgraph diameter}, we show that in this final case, we have a monochromatic subgraph cover using two subgraphs of diameter at most $3$.

\section{Determining $D_2^2(G)$ exactly for complete tripartite graphs and others}\label{sec3}

In this section we provide a complete classification of $D_2^2(G)$ for all complete tripartite graphs, $G$, as well as prove some results towards a classification of $D_2^2(G)$ for any complete multipartite graph $G$. In light of Theorem~\ref{theorem tripartite upper bound}, as the addition of edges can only decrease the value of $D_2^2(G)$, $D_2^2(G)\leq 3$ for all complete multipartite graphs with at least three parts. In Theorem~\ref{theorem most complete multipartite graphs are 3} we prove that a particular complete multipartite graph $G$ has $D_2^2(G)=3$, and then in Proposition~\ref{proposition monotonicity}, we prove that if we add a vertex to an existing part in a complete multipartite graph $G$, the diameter cover number cannot decrease. For any fixed $k$, this gives us that there are finitely many complete $k$ partite graphs with diameter cover number $2$ or less. In the case of complete tripartite graphs, we then classify the remaining graphs via direct analysis.

Our first result in this section shows that for each $k\geq 2$, a complete $(k+1)$-partite graph with no part of size $1$ and one large part will have diameter cover number $3$. Let $K_{a,b(k)}$ denote the complete $(k+1)$-partite graph with one part of size $a$ and $k$ parts of size $b$.

\begin{theorem}\label{theorem most complete multipartite graphs are 3}
	For all $k\geq 2$,
	\[
	D_2^2(K_{2k+1,2(k)})=3.
	\]
\end{theorem}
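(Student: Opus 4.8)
The plan is to prove $D_2^2(K_{2k+1,2(k)})=3$ by establishing the upper and lower bounds separately. The upper bound $D_2^2(K_{2k+1,2(k)})\leq 3$ is immediate from Theorem~\ref{theorem tripartite upper bound}, since $K_{2k+1,2(k)}$ is a complete multipartite graph with at least three parts, and by the monotonicity remark following that theorem (adding edges, equivalently working with a spanning complete multipartite subgraph, only decreases the diameter cover number). So the entire content of the proof is the matching lower bound $D_2^2(K_{2k+1,2(k)})\geq 3$: I must exhibit, for every $k\geq 2$, a specific $2$-coloring $\chi$ of the edges of $G:=K_{2k+1,2(k)}$ under which no two monochromatic subgraphs of diameter at most $2$ can cover all vertices of $G$.

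\medskip

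First I would set up notation: let $A$ be the large part with $|A|=2k+1$, and let $B_1,\dots,B_k$ be the $k$ parts each of size $2$. The design goal of the coloring is to force any diameter-$2$ monochromatic subgraph to miss at least one vertex, so that two of them cannot together cover the $2k+1+2k=4k+1$ vertices. A natural construction to try is a ``balanced'' or ``near-balanced'' coloring of the edges incident to $A$ that prevents any single color from admitting a small-diameter subgraph covering too much. Concretely, I would aim for a coloring in which each monochromatic subgraph of diameter at most $2$ has the shape of a double star (two adjacent centers together with their common-color neighbors) or a single star, and then argue that the vertex sets coverable by such low-diameter pieces are too restricted. Since a diameter-$2$ subgraph either is a star (all vertices within distance $1$ of one center) or has two centers at distance $1$ with everything else adjacent to a center, I would enumerate what a red diameter-$2$ subgraph and a blue diameter-$2$ subgraph can cover, and engineer $\chi$ so that their union always omits a vertex.

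\medskip

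The key steps, in order, are: (i) define the explicit coloring $\chi$ on $E(G)$, presumably by specifying for each vertex of $A$ its red/blue adjacency pattern into the $B_i$'s in a rotationally symmetric or combinatorially balanced way, and color edges between distinct $B_i,B_j$ as needed (the between-part-$B$ edges give extra flexibility and I would likely color them to reinforce the obstruction); (ii) characterize precisely the vertex sets that a monochromatic subgraph of diameter at most $2$ can cover under $\chi$—this is where one invokes the star/double-star dichotomy for diameter $2$; (iii) show by a counting or covering argument that any red diameter-$2$ piece $R$ and blue diameter-$2$ piece $L$ satisfy $|V(R)\cup V(L)|<4k+1$, i.e. they miss a vertex. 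The parity of $2k+1$ (an odd number of vertices in $A$) is the feature I expect to exploit: a balanced red/blue split of $A$ cannot be perfectly halved, forcing an imbalance that a diameter-$2$ cover cannot repair.

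\medskip

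The main obstacle will be step (ii)–(iii): correctly pinning down every possible diameter-$2$ monochromatic subgraph and ruling out each covering attempt. The difficulty is that a double star centered at two adjacent vertices can be quite large, so I must ensure the coloring is arranged so that no double star in either color reaches across enough of $A$ and the $B_i$'s simultaneously. I would anticipate needing a careful case analysis on where the (at most two) centers of each monochromatic piece lie—both in $A$, both in some $B_i$'s, or split—and using the balanced structure to show a leftover vertex in every case. Making the construction clean enough that this case analysis closes for \emph{all} $k\geq 2$ uniformly (rather than needing separate small-$k$ checks) is the part most likely to require ingenuity in choosing $\chi$; I would design $\chi$ with exactly this uniformity in mind, likely via a cyclic assignment of colors to the edges from $A$ to the pairs $B_i$.
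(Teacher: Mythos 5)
Your upper bound is exactly the paper's (Theorem~\ref{theorem tripartite upper bound} plus monotonicity under adding edges), so that part is fine. The lower bound, however, is not a proof: you never actually define the coloring $\chi$, and more importantly the structural lemma your plan hinges on is false. A graph of diameter $2$ is \emph{not} necessarily ``a star or two adjacent centers with everything else adjacent to a center'': $C_5$, complete bipartite graphs, and the Petersen graph all have diameter $2$ and none of them has a dominating vertex or a dominating edge of that form. (Indeed, a genuine double star --- two adjacent centers with pendant leaves --- has diameter $3$, not $2$.) The paper itself emphasizes this exact pitfall in Section~\ref{sec4}: diameter-$2$ graphs cannot be usefully classified, and pathological examples arise even in computer-generated covers. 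So steps (ii)--(iii) of your plan, which require enumerating all vertex sets coverable by diameter-$2$ monochromatic subgraphs, would collapse; any argument built on the star/double-star dichotomy cannot be repaired in that form. Your parity intuition about $2k+1$ also does not match what makes the construction work.

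The paper's actual construction sidesteps classification entirely and works only with distances: if two vertices are at distance at least $3$ in $G_{red}$ (resp.\ $G_{blue}$), they cannot both lie in a red (resp.\ blue) subgraph of diameter at most $2$. Concretely, write $A=\{a_1,\dots,a_{2k},c\}$ for the large part and let $\{b_{2i-1},b_{2i}\}$, $1\leq i\leq k$, be the parts of size $2$; color $a_ib_i$, $cb_i$, and all $b_ib_j$ edges blue, and everything else red. Then $c$ has no red edges, so some blue piece must contain $c$; the blue edges at the $a_i$'s form a perfect matching to the $b_i$'s, so a blue diameter-$2$ subgraph contains at most one $a_i$; hence a red piece must contain at least $2k-1\geq 3$ of the $a_i$'s together with some $b_j$ (else it has no edges), say $b_1$. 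Since $d_{red}(a_1,b_1)=3$, the red piece omits $a_1$ and so contains $a_2,\dots,a_{2k}$; then $b_2$ is at red distance $3$ from $a_2$ and at blue distance $3$ from $a_1$, so $b_2$ lies in neither piece --- a contradiction. Note that the part sizes $2k+1$ and $2,\dots,2$ are tailored to this matching-plus-extra-vertex structure, not to a parity obstruction; this is the idea your proposal is missing.
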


\begin{proof}
	The upper bound follows from Theorem~\ref{theorem tripartite upper bound} and the fact that $K_{2k+1,2k-2,2}$ is a spanning subgraph of $K_{2k+1,2(k)}$. Now let us focus on the lower bound.
	
	Let $G=K_{2k+1,2(k)}$, Let $A=\{a_1,a_2,\dots,a_{2k},c\}$ be the part of $G$ of size $2k+1$, and let $B=\{b_1,b_2,\dots,b_{2k}\}$ denote the remaining vertices of $G$, where $\{b_{2i-1},b_{2i}\}$ is a part of size $2$ for all $1\leq i\leq k$. Color the edges $a_ib_i$, $cb_i$, and $b_ib_j$ blue for all $1\leq i,j\leq 2k$ (if $b_ib_j$ is not an edge, we do not assign it a color), and color all remaining edges of $G$ red.
	
	We claim that under this coloring, $G$ does not have a monochromatic cover with two graphs of diameter at most $2$. To see this, let us assume to the contrary that there exists such a cover. First note that since $c$ is not incident to any red edges, there must be a blue subgraph containing $c$. Furthermore, the blue edges incident to the $a_i$'s form a matching, so any blue subgraph of diameter $2$ can contain at most one of the $a_i$'s. and since there are $2k\geq 4$ such vertices $a_i$, there must be a red subgraph that contains all but at most one of the $a_i$'s. This red subgraph must contain at least one of the $b_i$'s, otherwise it would contain no edges, so without loss of generality, we can assume that $b_1$ is in the red subgraph. Since $a_1$ and $b_1$ are distance $3$ from each other in red, $a_1$ cannot be in the red subgraph, and so the red subgraph contains the vertices in $\{a_2,a_3,\dots,a_{2k}\}$. Now, note that $b_2$ is distance $3$ from $a_2$ in red, and distance $3$ from $a_1$ in blue, so $b_2$ cannot be in either the red or blue subgraph, a contradiction. Thus $D_2^2(G)\geq 3$.
\end{proof}

	\begin{figure}
		\begin{center}
		\begin{tabular}{cc}
			\begin{tikzpicture}[scale=.9]
					\draw[thick,  color=blue] (0,3)--(-.5,.5)--(2,3)--(0,0)--(1,3);
				\draw[thick,  color=blue] (3,3)--(4,0)--(2,3)--(4.5,.5)--(4,3);
				\draw[thick,  color=blue] (-.5,.5)--(4,0)--(0,0)--(4.5,.5)--(-.5,.5);
				
				\draw[color=black] (0,3) node[draw,shape=circle,fill=black,scale=0.5] {};
				\draw[color=black] (1,3) node[draw,shape=circle,fill=black,scale=0.5] {};
				\draw[color=black] (2,3) node[draw,shape=circle,fill=black,scale=0.5] {};
				\draw[color=black] (3,3) node[draw,shape=circle,fill=black,scale=0.5] {};
				\draw[color=black] (4,3) node[draw,shape=circle,fill=black,scale=0.5] {};
				\draw[color=black] (-.5,.5) node[draw,shape=circle,fill=black,scale=0.5] {};
				\draw[color=black] (0,0) node[draw,shape=circle,fill=black,scale=0.5] {};
				\draw[color=black] (4,0) node[draw,shape=circle,fill=black,scale=0.5] {};
				\draw[color=black] (4.5,.5) node[draw,shape=circle,fill=black,scale=0.5] {};
				
				\draw (0,3.25) node {$a_1$};
				\draw (1,3.25) node {$a_2$};
				\draw (2,3.25) node {$c$};
				\draw (3,3.25) node {$a_3$};
				\draw (4,3.25) node {$a_4$};
				
				\draw (-.8,.5) node {$b_1$};
				\draw (-.3,0) node {$b_2$};
				
				\draw (4.35,0) node {$b_3$};
				\draw (4.85,.5) node {$b_4$};
			\end{tikzpicture}&	\begin{tikzpicture}[scale=.9]
				
				\draw[thick, color=red] (-.5,.5)--(1,3)--(4,0)--(0,3)--(0,0)--(3,3)--(-.5,.5)--(4,3)--(0,0);
				\draw[thick, color=red] (0,3)--(4.5,.5)--(1,3);
				\draw[thick, color=red] (3,3)--(4.5,.5);
				\draw[thick, color=red] (4,3)--(4,0);
				
				\draw[color=black] (0,3) node[draw,shape=circle,fill=black,scale=0.5] {};
				\draw[color=black] (1,3) node[draw,shape=circle,fill=black,scale=0.5] {};
				\draw[color=black] (2,3) node[draw,shape=circle,fill=black,scale=0.5] {};
				\draw[color=black] (3,3) node[draw,shape=circle,fill=black,scale=0.5] {};
				\draw[color=black] (4,3) node[draw,shape=circle,fill=black,scale=0.5] {};
				\draw[color=black] (-.5,.5) node[draw,shape=circle,fill=black,scale=0.5] {};
				\draw[color=black] (0,0) node[draw,shape=circle,fill=black,scale=0.5] {};
				\draw[color=black] (4,0) node[draw,shape=circle,fill=black,scale=0.5] {};
				\draw[color=black] (4.5,.5) node[draw,shape=circle,fill=black,scale=0.5] {};
				
				\draw (0,3.25) node {$a_1$};
				\draw (1,3.25) node {$a_2$};
				\draw (2,3.25) node {$c$};
				\draw (3,3.25) node {$a_3$};
				\draw (4,3.25) node {$a_4$};
				
				\draw (-.8,.5) node {$b_1$};
				\draw (-.3,0) node {$b_2$};
				
				\draw (4.35,0) node {$b_3$};
				\draw (4.85,.5) node {$b_4$};
			\end{tikzpicture}
		\end{tabular}
		\caption{A coloring of $K_{5,2,2}$ that does not admit a monochromatic cover with two diameter $2$ subgraphs}
		\label{3 coloring}
	\end{center}
	\end{figure}
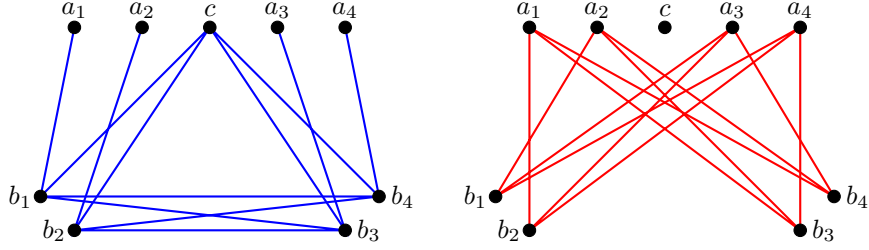

We now prove that adding vertices to a part in a complete multipartite graph does not decrease the diameter cover number.

\begin{proposition}\label{proposition monotonicity}
	Let $k\geq 2$, $b_1\geq a_1,b_2\geq a_2,\dots,b_k\geq a_k$ be positive integers. If $D_2^2(K_{a_1,a_2,\dots,a_k})\geq 3$, then $D_2^2(K_{b_1,b_2,\dots,b_k})\geq 3$. 
\end{proposition}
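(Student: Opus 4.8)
The plan is to prove the contrapositive in spirit: show that any $2$-coloring of the smaller graph $K_{a_1,\dots,a_k}$ that forbids a monochromatic cover by two diameter-$2$ subgraphs can be lifted to a coloring of the larger graph $K_{b_1,\dots,b_k}$ with the same property. Since we are given $D_2^2(K_{a_1,\dots,a_k})\geq 3$, by definition there exists a $2$-coloring $\chi$ of $K_{a_1,\dots,a_k}$ under which no two monochromatic subgraphs of diameter at most $2$ cover all the vertices. The natural construction is to take $K_{b_1,\dots,b_k}$ to be a blow-up of $K_{a_1,\dots,a_k}$: I would replace each vertex $u$ of the smaller graph by a set $V_u$ of $b_i/a_i$-many clones (more carefully, partition each new part of size $b_i$ into $a_i$ nonempty blocks, one per original vertex in that part). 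I then color an edge between a clone of $u$ and a clone of $w$ with $\chi(uw)$; this is well-defined since clones of the same original vertex are never adjacent (they lie in the same part), and it reproduces $\chi$ on a natural copy of $K_{a_1,\dots,a_k}$ sitting inside the blow-up.

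The heart of the argument is then to verify that this blown-up coloring still forbids a two-subgraph diameter-$2$ cover. First I would record the key distance fact about blow-ups: if $F$ is the blow-up described above and $x,y$ are clones of original vertices $u,w$, then the monochromatic distance between $x$ and $y$ in a given color equals the monochromatic distance between $u$ and $w$ in that same color in the original graph, provided $u\neq w$; and clones of the same vertex are at monochromatic distance $2$ through any common neighbor. Consequently, a monochromatic subgraph $S$ of diameter at most $2$ in the blow-up projects down to a monochromatic subgraph of diameter at most $2$ in $K_{a_1,\dots,a_k}$ on the set of original vertices whose clones appear in $S$. The crucial point is that distances do not shrink when passing to the blow-up on \emph{distinct} original vertices, so diameter at most $2$ upstairs forces diameter at most $2$ downstairs.

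With this projection in hand, I would argue by contradiction. Suppose the blow-up coloring admits two monochromatic subgraphs $S_1,S_2$ of diameter at most $2$ covering every clone. Project each $S_i$ to the set $U_i$ of original vertices whose clones lie in $S_i$; each projection is a monochromatic subgraph of $K_{a_1,\dots,a_k}$ of diameter at most $2$. Since every clone is covered, every original vertex has a clone in $S_1\cup S_2$, so $U_1\cup U_2$ covers $V(K_{a_1,\dots,a_k})$. This yields a monochromatic cover of $K_{a_1,\dots,a_k}$ by two diameter-$2$ subgraphs, contradicting $D_2^2(K_{a_1,\dots,a_k})\geq 3$. Hence no such $S_1,S_2$ exist, so $D_2^2(K_{b_1,\dots,b_k})\geq 3$.

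The step I expect to be the main obstacle is the distance-preservation claim for blow-ups, specifically the subtlety around clones of the \emph{same} original vertex and around the interaction between the diameter being computed inside the subgraph $S_i$ versus in the whole graph. I need to be careful that a monochromatic subgraph $S_i$ of diameter at most $2$ in the blow-up does not secretly use a short path routed through clones whose projection collapses (e.g. a length-$2$ path $x$–$z$–$y$ where $z$ is a clone of the same original vertex as $x$ or $y$). The clean way to handle this is to observe that the projection map sends any walk in $S_i$ to a walk of no greater length in $K_{a_1,\dots,a_k}$ after deleting repeated consecutive vertices, and that adjacent clones always project to adjacent or equal original vertices; I would state this as a short lemma and verify it directly, noting that the blow-up definition given in the paper's preliminaries guarantees adjacency is determined entirely by the original adjacency. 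Once this projection lemma is established, the remainder of the proof is the routine contradiction sketched above.
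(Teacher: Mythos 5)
Your proposal is correct. It rests on the same underlying idea as the paper's proof---lift the extremal coloring of $K_{a_1,\dots,a_k}$ to the larger graph by cloning vertices (giving each clone the same edge colors as its original), and then pull a hypothetical two-subgraph diameter-$2$ cover of the larger graph back down to one of the smaller graph---but the execution differs. The paper first reduces to adding a \emph{single} vertex (so the general statement follows by iterating), clones one vertex $x$ to create the new vertex $y$, and then the pull-back is the trivial substitution of $x$ for $y$ in any path of the cover, which manifestly preserves lengths. You instead perform the full blow-up in one step, which forces you to prove a genuinely more general projection lemma: a monochromatic diameter-$2$ subgraph of the blow-up projects to a monochromatic diameter-$2$ subgraph of the base graph, with the care you correctly flag about walks collapsing when consecutive clones project to the same or already-visited original vertices (note, in fact, that two clones of the same vertex are never adjacent, so a projected walk between clones of \emph{distinct} vertices can only shorten, never degenerate). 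Your route buys a cleaner one-shot statement with no induction and a reusable lemma about monochromatic distances in blow-ups; the paper's incremental route buys minimal technical overhead, since with only one clone present the projection argument reduces to a one-line substitution. Both are complete and correct proofs of the proposition.
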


\begin{proof}
	Note that it will suffice to prove that $D_2^2(K_{a_1+1,a_2,\dots,a_k})\geq 3$. Let $G=K_{a_1+1,a_2,\dots,a_k}$, and let $G'$ be an induced subgraph of $G$ isomorphic to $K_{a_1,a_2,\dots,a_k}$. By assumption, there exists a $2$-coloring $c:E(G')\to \{\text{red},\text{blue}\}$ of the edges of $G'$ such that there is no cover of $G'$ using two monochromatic subgraphs, each with diameter at most $2$. Let the edges of $G'$ be colored according to this $2$-coloring.
	
	Let $x\in V(G')$ be a vertex in the part of size $a_1$. Let $y$ be the vertex in $V(G)\setminus V(G')$. For each vertex $v\in V(G)$ such that $vy\in E(G)$, assign the color $c(vx)$ to the edge $vy$. We claim that there is no cover of $G$ with two monochromatic subgraphs of diameter at most $2$. Assume to the contrary that there was such a cover, say with monochromatic subgraphs $G_1$ and $G_2$, where the edges of $G_i$ are colored $c_i\in \{\text{red},\text{blue}\}$ for $i=1,2$. Let $V_i=V(G_i)$ for $i=1,2$. For $i=1,2$, if $y\in V_i$, let $V_i'=(V_i\setminus\{y\})\cup \{x\}$, and if $y\not\in V_i$, let $V_i'=V_i$.  Let $G_i'$ denote the subgraph of $G'[V_i']$ consisting of only edges of color $c_i$.

	We claim that $G_i'$ is connected and has diameter $2$. Indeed, if $V_i'=V_i$, this follows immediately from our contrary assumption. If $V_i'=(V_i\setminus\{y\})\cup \{x\}$, any path in $G_i$ containing $y$ can be replaced with a path of the same length in $G_i'$ containing $x$, so $\mathrm{diam}(G_i')\leq \mathrm{diam}(G_i)\leq 2$. Finally, note that $V(G')=V_1'\cup V_2'$ since $V(G)=V_1\cup V_2$, and $V_i\setminus\{y\}\subseteq V_i'$. This contradicts the original assumption that under the coloring $c$, there was no cover of $G'$ with two monochromatic subgraphs with diameter at most $2$.
\end{proof}

By combining Theorem~\ref{theorem most complete multipartite graphs are 3} and Proposition~\ref{proposition monotonicity}, we see that given any $k\geq 3$, all but finitely many $k$-partite graphs $G$ with no part of size $1$ have $D_2^2(G)=2$. Let us turn our attention to small complete tripartite graphs with no part of size $1$. There are $10$ complete tripartite graphs with no part of size $1$ that our prior results do not imply a result for $D_2^2$, namely the graphs $K_{a,b,c}$ with $2\leq c\leq b\leq a\leq 4$. Fortunately, to determine which complete tripartite graphs require diameter $3$, we do not need to check all $10$ of these graphs, but instead we need to find the ``minimal'' ones that require diameter $3$ and the ``maximal'' ones that do not require diameter $3$, and then we can apply Proposition~\ref{proposition monotonicity} and its contrapositive to classify the rest.

\begin{proposition}\label{proposition small tripartite graphs}
	We have the following:
	\begin{itemize}
		\item $D_2^2(K_{4,3,2})=3$,
		\item $D_2^2(K_{4,2,2})=2$,
		\item $D_2^2(K_{3,3,3})=2$.
	\end{itemize}
\end{proposition}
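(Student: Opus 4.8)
The three equalities share the same easy halves. By Theorem~\ref{theorem tripartite upper bound} (together with the observation that deleting edges only increases $D_2^2$), every complete tripartite graph satisfies $D_2^2\le 3$, so in particular all three of these graphs have $D_2^2\le 3$. For the universal lower bound $D_2^2\ge 2$, note that a monochromatic subgraph of diameter at most $1$ is a monochromatic clique, and a clique in a tripartite graph meets each part at most once, hence has at most three vertices; two such subgraphs therefore cover at most six vertices, which is fewer than $|V(K_{4,3,2})|=|V(K_{3,3,3})|=9$ and $|V(K_{4,2,2})|=8$. Thus it remains to prove the single lower bound $D_2^2(K_{4,3,2})\ge 3$ and the two upper bounds $D_2^2(K_{4,2,2})\le 2$ and $D_2^2(K_{3,3,3})\le 2$.

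For the lower bound on $K_{4,3,2}$, the plan is to exhibit one explicit $2$-coloring $\chi$ and verify that no two monochromatic subgraphs of diameter at most $2$ cover $V$. To certify a candidate $\chi$ I would, for each colour $\gamma$, first record which pairs of vertices lie at $\gamma$-distance at most $2$; a connected monochromatic subgraph of diameter at most $2$ is then exactly a vertex set that is pairwise within $\gamma$-distance $2$ (with every vertex having a $\gamma$-neighbour inside the set), so this data determines the family of vertex sets a single diameter-$2$ piece of colour $\gamma$ can cover. It then suffices to check that no two members of these two families (one per colour) have union $V$.

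The hard part of the lower bound is designing $\chi$ so that this check passes, and here the main obstacle is subtle: it is not enough to make each of $G_{red}$ and $G_{blue}$ connected of diameter at least $3$. One must also forbid covers that use two pieces of the \emph{same} colour and covers that mix a red piece with a blue piece. This is exactly where the natural ``diameter-$4$'' colorings fail: the blue $C_6$-blowup that governs the extremal case of Theorem~\ref{theorem tripartite upper bound} has a red complement consisting of two complete multipartite graphs, each of diameter $2$, which already cover $V$; similarly, forcing a vertex into a blue piece by making all its edges blue tends to create a red common neighbour of the remaining vertices of its part, again yielding a diameter-$2$ cover. The coloring I would use must therefore make \emph{each} colour individually resistant to being covered by two diameter-$2$ pieces while simultaneously blocking all mixed pairs; I expect to locate such a coloring by a direct search and then certify it by the finite distance check above.

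For the two upper bounds the plan is to show that \emph{every} $2$-coloring admits a cover by two diameter-$\le 2$ monochromatic subgraphs. First I would split on whether some vertex $w$ in a smallest part is monochromatic to an entire larger part: if so, the corresponding monochromatic star (diameter $2$) covers that whole part together with $w$, leaving only a bounded residual ($3$ vertices for $K_{4,2,2}$), which I would cover with one further diameter-$2$ piece by a short analysis of the few colorings on those residual vertices. If no vertex is monochromatic to any part, I would run the $\mathrm{tc}_2\le 2$ decomposition of Section~\ref{tc}: one of its outcomes already produces two complete multipartite (hence diameter-$\le 2$) blue pieces, and in the remaining outcomes I would use the small size of the graph to replace the red and blue components it produces by diameter-$\le 2$ pieces. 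The hard part for the upper bounds will be this balanced regime, where no large monochromatic star is available and the diameter-$3$ tools are useless (the star-and-double-star cover of Lemma~\ref{lemma prev paper dom vtx} has diameter $3$, not $2$); because both graphs are small and highly symmetric, however, the balanced case reduces to a finite verification that I would carry out by hand using their automorphism groups (or confirm by computer).
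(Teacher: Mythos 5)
Your proposal correctly reduces the proposition to the right three tasks (the shared upper bound $\le 3$ via Theorem~\ref{theorem tripartite upper bound}, the trivial lower bound $\ge 2$ via the clique-size count, then the lower bound for $K_{4,3,2}$ and the two upper bounds), and in outline it matches the paper's strategy: the paper also proves $D_2^2(K_{4,3,2})\ge 3$ by exhibiting an explicit coloring and checking that no two monochromatic diameter-$2$ subgraphs cover the vertex set, and it delegates the two upper bounds ($K_{4,2,2}$ and $K_{3,3,3}$) to a brute-force computer check. But your proposal stops exactly where the mathematical content begins: you never exhibit the coloring of $K_{4,3,2}$. ``I expect to locate such a coloring by a direct search and then certify it'' is a declaration of intent, not a proof; the construction is the entire substance of the lower bound, and nothing in your text allows a reader to conclude that such a coloring exists. (The paper gives one explicitly in Figure~\ref{432}, and its verification is a short case analysis: in that coloring $v_2,v_3,v_6$ are pairwise at blue distance at least $3$ and $v_0,v_1,v_7$ are pairwise at red distance $3$, which forces one red and one blue piece in any putative cover, after which two cases on which piece contains $v_8$ each end in contradiction.) The same criticism applies to your upper bounds: the ``balanced regime'' you isolate is precisely the hard part, and you leave it as a finite verification to be carried out by hand or by computer. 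Deferring to exhaustive search is what the paper does as well, but the paper actually ran the search; a plan that has not been executed establishes nothing.

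A smaller technical point: your certification criterion for a colour-$\gamma$ diameter-$2$ piece --- ``a vertex set that is pairwise within $\gamma$-distance $2$'' --- conflates distance in the spanning subgraph $G_\gamma$ with distance inside the piece. The correct condition is that every pair of vertices in the set is $\gamma$-adjacent or has a common $\gamma$-neighbour \emph{inside the set}; a length-$2$ path through a vertex outside the set does not help. Your looser criterion accepts a superset of the truly coverable sets, so a passed check would still soundly certify the lower bound, but it can wrongly reject valid extremal colorings, and it would be unsound if reused in the opposite direction in your upper-bound arguments. The paper's case analysis relies on exactly this distinction (e.g.\ ``the only blue path of length $2$ from $v_1$ to $v_5$ is through $v_7$, so since $v_1\in B$ and $v_7\notin B$, we have $v_5\notin B$''), so it is not a pedantic quibble.
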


As the proofs for Proposition~\ref{proposition small tripartite graphs} involve simple logic and extensive case work, we only include a proof of the first equality, below. These graphs are small enough that we were able to verify these bounds using a brute force computer program (see supplemental files on arXiv).

\begin{proof}[proof that $D_2^2(K_{4,3,2})=3$]
	Note that the upper bound on $D_2^2(K_{4,3,2})$ follows from Theorem~\ref{theorem tripartite upper bound}. For the lower bound, consider the coloring of $K_{4,3,2}$ shown in Figure \ref{432}. Assume towards a contradiction that there is some covering of the vertices with $2$ monochromatic diameter $2$ subgraphs. First we note that in the spanning blue subgraph vertices $v_2,v_3,$ and $v_6$ are pairwise distance at least $3$ from each other, and vertices $v_0,v_1,$ and $v_7$ are pairwise distance $3$ in the spanning red subgraph. Therefore, one of our subgraphs in our cover must be red, while the other must be blue.  Let $R$ be the set of vertices in the red subgraph, and $B$ be the set of vertices in the blue subgraph. We consider two cases based on which subgraphs contains $v_8$.
	
	\textbf{Case 1:} $v_8 \in R$. Since $v_2$ is distance $3$ from $v_8$ in red, we must have that $v_2 \in B\setminus R$. Furthermore, as $v_7$ is distance $3$ from $v_2$ in blue, $v_7 \in R\setminus B$. Since $v_1$ is distance 3 from $v_7$ in red, we have $v_1 \in B\setminus R$. Note that the only blue path of length $2$ from $v_1$ to $v_5$ is through $v_7$, so since $v_1\in B$, and $v_7\not\in B$, we have that $v_5\not\in B$. Similarly, the only red path of length $2$ from $v_7$ to $v_5$ is through $v_2$, so since $v_7\in R$ and $v_2\not\in R$, we have that $v_5\not\in R$, a contradiction.
	
	\textbf{Case 2:} $v_8 \in B\setminus R$. If $v_3\in B$, then $v_1$ and $v_4$ are in $R\setminus B$ since they are distance at least $3$ in blue from $v_3$, but the only red path of length $2$ from $v_1$ to $v_4$ uses $v_8$, which is not in $R$, a contradiction. Thus, $v_3\in R\setminus B$. As $v_8$ and $v_1$ are distance 3 in blue, we know that $v_1 \in R\setminus B$, but the only red path of length $2$ from $v_1$ to $v_3$ is through $v_8$, yielding another contradiction and completing the proof.
\end{proof}

	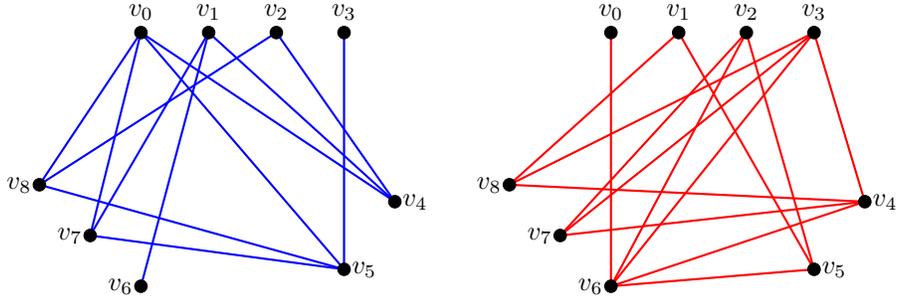
\begin{figure}
		\begin{center}
		\begin{tabular}{cc}
			\begin{tikzpicture}[scale=.9]
				\draw[thick,  color=blue] (2,3)--(0.5,0.75);
				\draw[thick,  color=blue] (2,3)--(1.25,0);
				\draw[thick,  color=blue] (2,3)--(5,-0.5);
				\draw[thick,  color=blue] (2,3)--(5.75,0.5);
				
				\draw[thick,  color=blue] (3,3)--(1.25,0);
				\draw[thick,  color=blue] (3,3)--(2,-0.75);
				\draw[thick,  color=blue] (3,3)--(5.75,0.5);
				
				\draw[thick,  color=blue] (4,3)--(0.5,0.75);
				\draw[thick,  color=blue] (4,3)--(5.75,0.5);
				
				\draw[thick,  color=blue] (5,3)--(5,-0.5);
				
				\draw[thick,  color=blue] (5,-0.5)--(0.5,0.75);
				\draw[thick,  color=blue] (5,-0.5)--(1.25,0);
				
				\draw[color=black] (2,3) node[draw,shape=circle,fill=black,scale=0.5] {};
				\draw[color=black] (3,3) node[draw,shape=circle,fill=black,scale=0.5] {};
				\draw[color=black] (4,3) node[draw,shape=circle,fill=black,scale=0.5] {};
				\draw[color=black] (5,3) node[draw,shape=circle,fill=black,scale=0.5] {};
				\draw[color=black] (0.5,0.75) node[draw,shape=circle,fill=black,scale=0.5] {};
				\draw[color=black] (1.25,0) node[draw,shape=circle,fill=black,scale=0.5] {};
				\draw[color=black] (2,-0.75) node[draw,shape=circle,fill=black,scale=0.5] {};
				\draw[color=black] (5,-0.5) node[draw,shape=circle,fill=black,scale=0.5] {};
				\draw[color=black] (5.75,0.5) node[draw,shape=circle,fill=black,scale=0.5] {};

				\draw (2,3.3) node {$v_0$};
				\draw (3,3.3) node {$v_1$};
				\draw (4,3.3) node {$v_2$};
				\draw (5,3.3) node {$v_3$};
				\draw (6.05,0.5) node{$v_4$};
				\draw (5.3,-0.5) node{$v_5$};
				\draw (1.7,-.75) node{$v_6$};
				\draw (0.95,0) node{$v_7$};
				\draw (0.2,0.75) node{$v_8$};
			\end{tikzpicture}&	\begin{tikzpicture}[scale=.9]
			
				\draw[thick,  color=red] (2,3)--(2,-0.75);
			
			\draw[thick,  color=red] (3,3)--(0.5,0.75);
			\draw[thick,  color=red] (3,3)--(5,-0.5);
			
			\draw[thick,  color=red] (4,3)--(2,-0.75);
			\draw[thick,  color=red] (4,3)--(1.25,-0);
			\draw[thick,  color=red] (4,3)--(5,-0.5);
			
			\draw[thick,  color=red] (5,3)--(2,-0.75);
			\draw[thick,  color=red] (5,3)--(1.25,-0);
			\draw[thick,  color=red] (5,3)--(0.5,0.75);
			\draw[thick,  color=red] (5,3)--(5.75,0.5);
			
			\draw[thick,  color=red] (5.75,0.5)--(2,-0.75);
			\draw[thick,  color=red] (5.75,0.5)--(1.25,0);
			\draw[thick,  color=red] (5.75,0.5)--(0.5,0.75);
			
			\draw[thick,  color=red] (5,-0.5)--(2,-0.75);
				
				\draw[color=black] (2,3) node[draw,shape=circle,fill=black,scale=0.5] {};
				\draw[color=black] (3,3) node[draw,shape=circle,fill=black,scale=0.5] {};
				\draw[color=black] (4,3) node[draw,shape=circle,fill=black,scale=0.5] {};
				\draw[color=black] (5,3) node[draw,shape=circle,fill=black,scale=0.5] {};
				\draw[color=black] (0.5,0.75) node[draw,shape=circle,fill=black,scale=0.5] {};
				\draw[color=black] (1.25,0) node[draw,shape=circle,fill=black,scale=0.5] {};
				\draw[color=black] (2,-0.75) node[draw,shape=circle,fill=black,scale=0.5] {};
				\draw[color=black] (5,-0.5) node[draw,shape=circle,fill=black,scale=0.5] {};
				\draw[color=black] (5.75,0.5) node[draw,shape=circle,fill=black,scale=0.5] {};
				
				\draw (2,3.3) node {$v_0$};
				\draw (3,3.3) node {$v_1$};
				\draw (4,3.3) node {$v_2$};
				\draw (5,3.3) node {$v_3$};
				\draw (6.05,0.5) node{$v_4$};
				\draw (5.3,-0.5) node{$v_5$};
				\draw (1.7,-.75) node{$v_6$};
				\draw (0.95,0) node{$v_7$};
				\draw (0.2,0.75) node{$v_8$};
			\end{tikzpicture}
		\end{tabular}
		\caption{A coloring of $K_{4,3,2}$ that does not admit a monochromatic cover with two diameter $2$ subgraphs}
		\label{432}
	\end{center}
	\end{figure}

Note that $D_2^2(K_3)=D_2^2(K_{2,1,1})=1$ since both graphs can be covered by two edges, but $D_2^2(K_{3,1,1})\geq 2$ since $K_{3,1,1}$ cannot be covered by two diameter $1$ subgraphs (even without regards to an edge coloring), and $D_2^2(K_{2,2,1})\geq 2$ since any cover of $K_{2,2,1}$ with two diameter $1$ subgraphs would necessarily need to contain at least one $K_3$, but if we color the edges incident with the vertex in the part of size $1$ red and all other edges blue, there is no monochromatic triangle. We remind the reader that any complete multipartite graph with a part of size $1$ has $D_2^2(G)\leq 2$ since a red and blue star centered at this vertex cover everything.

These results allow us to determine $D_2^2(G)$ for all complete tripartite graphs $G$. The results on complete tripartite graphs are summarized concisely below.

\begin{theorem}
	Let $G$ be a complete tripartite graph. 
	\begin{itemize}
		\item $D_2^2(G)=3$ if and only if $K_{5,2,2}\subseteq G$ or $K_{4,3,2}\subseteq G$,
		\item $D_2^2(G)=1$ if and only if $G=K_3$ or $G=K_{2,1,1}$, and
		\item $D_2^2(G)=2$ otherwise.
	\end{itemize}
\end{theorem}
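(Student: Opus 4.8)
The plan is to assemble the theorem from the exact values already computed, propagated through the monotonicity of Proposition~\ref{proposition monotonicity}, and to supply the small structural lower bounds that separate the value $1$ from $2$. Since every complete tripartite graph satisfies $1\le D_2^2(G)\le 3$ (the upper bound is Theorem~\ref{theorem tripartite upper bound}, and $D_2^2(G)\ge 1$ because $G$ has at least three vertices while two diameter-$0$ subgraphs cover at most two), it suffices to pin down exactly when the value is $3$, exactly when it is $1$, and to observe that all remaining graphs take value $2$. Throughout I identify $K_{p,q,r}\subseteq K_{a,b,c}$ with the componentwise inequality on the sorted part sizes, which is precisely the regime to which Proposition~\ref{proposition monotonicity} applies.

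For the characterization of $D_2^2(G)=3$ I would argue both directions by monotonicity. For the forward implication I use the base cases $D_2^2(K_{5,2,2})=3$ (the $k=2$ instance of Theorem~\ref{theorem most complete multipartite graphs are 3}) and $D_2^2(K_{4,3,2})=3$ (Proposition~\ref{proposition small tripartite graphs}): if $K_{5,2,2}\subseteq G$ or $K_{4,3,2}\subseteq G$, then Proposition~\ref{proposition monotonicity} gives $D_2^2(G)\ge 3$, hence equality. For the converse I show that a complete tripartite graph containing neither $K_{5,2,2}$ nor $K_{4,3,2}$ has $D_2^2(G)\le 2$. Graphs with a part of size $1$ satisfy $D_2^2(G)\le 2$ by the red/blue star cover and cannot contain $K_{5,2,2}$ or $K_{4,3,2}$, so the content is the case $K_{a,b,c}$ with $a\ge b\ge c\ge 2$. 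A short case analysis on the sorted sizes shows that avoiding $K_{5,2,2}$ forces $a\le 4$, and avoiding $K_{4,3,2}$ then forces either $a\le 3$ (so $G\subseteq K_{3,3,3}$) or $(a,b,c)=(4,2,2)$ (so $G\subseteq K_{4,2,2}$). Since $D_2^2(K_{4,2,2})=D_2^2(K_{3,3,3})=2$ by Proposition~\ref{proposition small tripartite graphs}, the contrapositive of Proposition~\ref{proposition monotonicity} yields $D_2^2(G)\le 2$ for every such $G$.

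For the characterization of $D_2^2(G)=1$, the ``if'' direction is the observation recorded before the theorem: $K_3$ and $K_{2,1,1}$ are each covered by two independent edges, which are monochromatic diameter-$1$ subgraphs. For the ``only if'' direction I must show $D_2^2(G)\ge 2$ whenever $G\notin\{K_3,K_{2,1,1}\}$. The key point is that a monochromatic subgraph of diameter at most $1$ is a monochromatic clique, and in a tripartite graph a clique meets each part in at most one vertex; hence two such subgraphs cover at most two vertices of any single part. Consequently every $G$ with a part of size at least $3$ has $D_2^2(G)\ge 2$ for \emph{every} coloring. The only complete tripartite graphs with all parts of size at most $2$ other than $K_3$ and $K_{2,1,1}$ are $K_{2,2,1}$ and $K_{2,2,2}$, which I would handle directly: for $K_{2,2,1}$, coloring the singleton's edges red and all others blue leaves no monochromatic triangle, so its five vertices cannot be covered by two cliques; for $K_{2,2,2}$, one exhibits a coloring in which no two complementary triangles are both monochromatic, so its six vertices cannot be covered by two cliques (alternatively, $K_{2,2,1}\subseteq K_{2,2,2}$ together with the diameter-$1$ analogue of Proposition~\ref{proposition monotonicity}, whose proof is identical with ``$2$'' replaced by ``$1$'', gives the bound).

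Assembling these pieces produces the trichotomy: $D_2^2(G)=3$ exactly when $G$ contains $K_{5,2,2}$ or $K_{4,3,2}$; $D_2^2(G)=1$ exactly for $K_3$ and $K_{2,1,1}$; and $D_2^2(G)=2$ in every remaining case, since those graphs lie neither in the $1$-list nor in the $3$-list while $1\le D_2^2(G)\le 3$. I expect the main obstacle to be purely organizational rather than deep: verifying that the two containment dichotomies (``avoids $K_{5,2,2},K_{4,3,2}$'' $\Rightarrow$ ``$\subseteq K_{4,2,2}$ or $K_{3,3,3}$'', and ``$\notin\{K_3,K_{2,1,1}\}$'' $\Rightarrow$ ``contains $K_{3,1,1}$ or $K_{2,2,1}$'') interact correctly so that the $1$/$2$/$3$ regions genuinely partition all complete tripartite graphs, together with the handful of small boundary cases (notably $K_{2,2,2}$) that fall outside the stated monotonicity proposition at diameter $1$ and so must be checked by hand.
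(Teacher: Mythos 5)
Your proposal is correct and follows essentially the same route as the paper: the theorem there is assembled from exactly the ingredients you cite --- the upper bound of Theorem~\ref{theorem tripartite upper bound}, the base cases $D_2^2(K_{5,2,2})=3$ (Theorem~\ref{theorem most complete multipartite graphs are 3} with $k=2$) and $D_2^2(K_{4,3,2})=3$, $D_2^2(K_{4,2,2})=D_2^2(K_{3,3,3})=2$ (Proposition~\ref{proposition small tripartite graphs}), monotonicity (Proposition~\ref{proposition monotonicity}) and its contrapositive, plus the small-case diameter-$1$ observations for $K_3$, $K_{2,1,1}$, $K_{3,1,1}$, and $K_{2,2,1}$. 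Your write-up is in fact slightly more careful than the paper's summary in two spots --- justifying that tripartite-in-tripartite containment amounts to sorted componentwise inequality, and explicitly disposing of $K_{2,2,2}$ via a triangle-free-in-each-color coloring (or the diameter-$1$ analogue of monotonicity), a case the paper leaves implicit --- but the underlying argument is the same.
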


\section{Progress towards Conjecture \ref{conjecture too many 2s}}\label{sec4}
In this section, we consider the family of complete multipartite graphs in which each partite set has size $2$. Recall that we denote such a graph with $k$ parts as $G_k$. 

\begin{problem}\label{problem too many 2s}
	Determine the value of $D_2^2(G_k)$.
\end{problem}

Using the same program that we checked small examples of complete tripartite graphs with, we found that $D_2^2(G_3)=D_2^2(G_4)=D_2^2(G_5)=2$. In attempts to determine whether this value is $2$ or $3$ for $k \geq 6$, we proved the following results. These results serve as properties of a minimal example which requires diameter $3$, if such a graph and coloring exists. Note that one difficulty in solving this problem is being unable to classify graphs of diameter $2$. In particular, we often attempt to build a cover using familiar graphs of diameter 2 that are relatively easy to find by hand, such as stars and $C_5$ blow-ups. However, there are many diameter 2 graphs which are more difficult to find by hand. In particular, almost all graphs are diameter 2, and further we have pathological examples such as the Petersen graph and the graph pictured in Figure \ref{beetle}, which was in fact used in multiple covers produced by a program we ran to check small cases.

\begin{figure}
	\begin{center}
	\begin{tikzpicture}[scale=2]
		\draw[color=black, thick] (.1,0)--(.9,0)--(0.5,-0.4)--(0.1,0)--(-0.1,-0.65)--(0.5,-1.2)--(.6,-.8)--(.5,-.4);
		\draw[color=black, thick] (.9,0)--(1.1,-0.65)--(.5,-1.2);
		
		\draw[color=black] (.1,0) node[draw,shape=circle,fill=black,scale=0.35] {};
		\draw[color=black] (.9,0) node[draw,shape=circle,fill=black,scale=0.35] {};
		\draw[color=black] (0.5,-0.4) node[draw,shape=circle,fill=black,scale=0.35] {};
		\draw[color=black] (-0.1,-0.65) node[draw,shape=circle,fill=black,scale=0.35] {};
		\draw[color=black] (1.1,-0.65) node[draw,shape=circle,fill=black,scale=0.35] {};
		\draw[color=black] (0.6,-0.8) node[draw,shape=circle,fill=black,scale=0.35] {};
		\draw[color=black] (0.5,-1.2) node[draw,shape=circle,fill=black,scale=0.35] {};
	\end{tikzpicture}
	\end{center}
	\caption{A diameter 2 graph used in some colorings of $K_{2,2,2,2,2}$.}
	\label{beetle}
\end{figure}
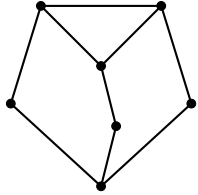

Throughout the section we will suppose that a 2-coloring of the edges of $G_k$ exists that requires a subgraph of diameter 3 in every cover, and state necessary properties. We fix such a 2-coloring of $G_k$ and let color 1 be blue and color 2 be red.

For any $v \in V(G_k)$, we will denote by $v'$ the unique vertex such that $vv' \notin E(G_k)$, and we call $v'$ the \textit{clone} of $v$. We begin with a result on the length 2 paths between clones.

\begin{property}\label{property distance 2}
	For each $v \in V(G_k)$, every possible color combination of length 2 $vv'$-paths exist. That is, clones are distance 2 from each other in both colors 1 and 2, as well as have length 2 paths which alternate colors in both orders.
\end{property}

\begin{proof}
	For $i,j \in [2]$, let $X_{i,j}$ denote the vertices which send color $i$ to $v$ and color $j$ to $v'$. Note that each of the four sets represent one of the four possible color combinations of length 2 $vv'$-paths. Suppose $X_{i,j} = \emptyset$ for some $i,j \in [2]$. Then the color $j$ star at $v$ and the color $i$ star at $j$ cover $V(G_k)$ and each have diameter 2. Hence $X_{i,j} \neq \emptyset$ for all $i,j \in [2]$.
\end{proof}

 Note that there must exist a vertex $x \in V(G_k)$ that has at least one vertex at distance at least 3 in blue, since otherwise the spanning blue subgraph is the desired cover of diameter at most 2. By the previous property, this other vertex is not $x'$. We will now partition the vertex set in terms of the distances in blue from $x$ and $x'$.

For $i,j \in [3]$, let $A_{i,j}$ denote the vertices that are distance $i$ in blue from $x$ and distance $j$ in blue from $x'$, with the convention that when $i$ or $j$ is 3, we include vertices of distance at least 3 in blue instead of exactly 3. Let $A_{i,*} = A_{i,1} \cup A_{i,2} \cup A_{i,3}$ and $A_{*,j} = A_{1,j} \cup A_{2,j} \cup A_{3,j}$.

Next, we eliminate three of these nine sets.

\begin{property}\label{property empty sets}
	$A_{2,3} \cup A_{3,2} \cup A_{3,3} = \emptyset$.
\end{property}

\begin{proof}
	First, suppose $A_{3,2} \cup A_{3,3} \neq \emptyset$ and let $y \in A_{3,2} \cup A_{3,3}$. Consider the red star at $x$ and the red star at $y$. The red star at $x$ covers $A_{2,*} \cup A_{3,*}$, and the red star at $y$ covers $\{x'\} \cup (A_{1,*}\setminus \{y'\})$. If $y' \notin A_{1,*}$, then this gives the desired cover with diameter at most 2.
	
	If $y' \in A_{1,1}$, then consider the red and blue stars at $x'$. Notice that the only red neighbors of $x'$ which do not also send a red edge to $x$ are $A_{1,2} \cup A_{1,3}$. However, all those vertices are red neighbors of $y$, so we can add $x$ to the red star at $x'$ while maintaining diameter 2. Thus we have the desired cover. 
	
	If $y' \in A_{1,2} \cup A_{1,3}$, then consider the red and blue stars at $y'$. Notice that the only vertex not covered by these two subgraphs is $y$. We will add $y$ to the red subgraph by also adding $x$. As long as either $A_{3,*}\setminus \{y\} \neq \emptyset$ or $y'$ has a red neighbor in $A_{2,*}$, this new red subgraph has diameter 2, and we have the desired cover. Otherwise, $y$ is the only vertex in $A_{3,*}$ and every vertex in $A_{2,*}$ sends a blue edge to $y'$. Hence the red star at $y$ and the blue star at $y'$ give the desired cover.
	
	Finally, we can switch the roles of $x$ and $x'$ in the above argument to show that if $A_{2,3} \neq \emptyset$, we get the desired cover.
\end{proof}

The previous two properties imply that the following sets must be nonempty.

\begin{corollary}\label{corollary nonempty sets}
	$A_{1,1} \neq \emptyset$, $A_{2,2} \neq \emptyset$, $A_{1,2} \cup A_{1,3} \neq \emptyset$, and $A_{2,1} \cup A_{3,1} \neq \emptyset$.
\end{corollary}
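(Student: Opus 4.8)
The plan is to read off each of the four non-emptiness statements from the four color types of length-$2$ $xx'$-paths guaranteed by Property~\ref{property distance 2}, invoking Property~\ref{property empty sets} only where a union must be collapsed to a single set. The starting observation is a dictionary between the edge colors at $x$ (respectively $x'$) and the row (respectively column) index of the partition: for a vertex $w \neq x, x'$, we have $w \in A_{1,*}$ exactly when the edge $xw$ is blue (blue-distance $1$), and $w \in A_{2,*} \cup A_{3,*}$ exactly when $xw$ is red (blue-distance at least $2$); symmetrically $w \in A_{*,1}$ iff $x'w$ is blue and $w \in A_{*,2} \cup A_{*,3}$ iff $x'w$ is red. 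I would also note at the outset that the internal vertex $w$ of any $xx'$-path of length $2$ is adjacent to both $x$ and $x'$, hence distinct from each, so it genuinely lies in the partition of $V(G_k) \setminus \{x, x'\}$.

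First I would handle $A_{1,1}$. By Property~\ref{property distance 2} there is an all-blue length-$2$ path $x, w, x'$; its center $w$ is blue-adjacent to both $x$ and $x'$, so $w \in A_{1,*} \cap A_{*,1} = A_{1,1}$. The same idea applied to the blue-then-red path places its center in $A_{1,*} \cap (A_{*,2} \cup A_{*,3}) = A_{1,2} \cup A_{1,3}$, and applied to the red-then-blue path places its center in $(A_{2,*} \cup A_{3,*}) \cap A_{*,1} = A_{2,1} \cup A_{3,1}$. These give the third and fourth claims directly, with no appeal to Property~\ref{property empty sets} needed, since in each case the target is already a union over the ambiguous index.

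The one case that requires Property~\ref{property empty sets} is $A_{2,2}$. The all-red length-$2$ path $x, w, x'$ has both edges red, so its center satisfies $w \in (A_{2,*} \cup A_{3,*}) \cap (A_{*,2} \cup A_{*,3}) = A_{2,2} \cup A_{2,3} \cup A_{3,2} \cup A_{3,3}$. Since $A_{2,3} \cup A_{3,2} \cup A_{3,3} = \emptyset$ by Property~\ref{property empty sets}, we conclude $w \in A_{2,2}$, and hence $A_{2,2} \neq \emptyset$.

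I do not anticipate a genuine obstacle: once the color-to-index dictionary is fixed, each statement is a one-line consequence of the existence of the corresponding $xx'$-path. The only point demanding a little care is ensuring that the all-red path lands in $A_{2,2}$ rather than in one of the three sets already known to be empty, which is precisely where Property~\ref{property empty sets} is used.
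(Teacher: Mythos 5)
Your proof is correct and matches the paper's intended argument: the paper states the corollary as an immediate consequence of Property~\ref{property distance 2} and Property~\ref{property empty sets}, and your fleshing-out (reading each non-emptiness claim off the center vertex of the corresponding bicolored length-$2$ $xx'$-path, with Property~\ref{property empty sets} invoked only to pin the all-red case into $A_{2,2}$) is exactly that implication made explicit.
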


Note that by our choice of $x$ and Property \ref{property empty sets}, we have more specifically that $A_{3,1} \neq \emptyset$

Now we give the location of the clones of vertices in $A_{1,3}$ and $A_{3,1}$.

\begin{property}
	For any $y \in A_{1,3}$, $y' \in A_{2,1}$. Similarly, for any $z \in A_{3,1}$, $z' \in A_{1,2}$.
\end{property}

\begin{proof}
	Fix $y \in A_{1,3}$. First, we will show that $y' \in A_{2,1} \cup A_{3,1}$. Consider the blue star at $x$ along with the red $C_5$ blow up formed by $x$, $A_{2,2}$, $x'$, $y$, and $(A_{2,1} \cup A_{3,1})\setminus \{y'\}$. Note that $A_{2,2} \neq \emptyset$ and $A_{2,1} \cup A_{3,1} \neq \emptyset$ by Corollary \ref{corollary nonempty sets}. Thus if $y' \notin A_{2,1} \cup A_{3,1}$, those two monochromatic subgraphs form the desired cover.
	
	Now suppose $y' \in A_{3,1}$. Consider the red star at $y'$ along with the red $C_5$ blow up formed by $x$, $A_{2,2}$, $x'$, $y$, and $(A_{2,1} \cup A_{3,1})\setminus \{y'\}$. This gives the desired cover unless $(A_{2,1} \cup A_{3,1})\setminus \{y'\} = \emptyset$, in which case we replace the $C_5$ blow up with the red star at $x'$. Thus we have the desired cover.
	
	Therefore $y' \in A_{2,1}$. The second half of the property can be proven by switching the roles of $x$ and $x'$ in the above argument.
\end{proof}

\section{Concluding Results}
In this paper we were able to determine the diameter cover number exactly for complete tripartite graphs, and for some classes of complete multipartite graphs, whenever two colors are used and two subgraphs are allowed. It would however be interesting to have a more thorough idea of the diameter cover number for other complete multipartite graphs. One of the most tangible questions in this regard would be work towards Conjecture \ref{conjecture too many 2s}, restated here:

\begin{problem41}
	Determine the value of $D_2^2(G_k)$, where $G_k$ is the complete $k$-partite graph in which each part is size $2$.
\end{problem41}

Section \ref{sec4} covers some results we were able to show regarding the previous problem. Knowing this border case could also possibly help determine the diameter cover number for other complete multipartite graphs. In the other direction, one could also ask about complete bipartite graphs instead. 

\begin{problem}
	Determine the value of $D_2^2(K_{a,b})$.
\end{problem}

It has been shown that $D_2^2(K_{a,b}) \le 4$ in \cite{Grace}, however this is not known to be sharp, and one could work towards providing a complete classification as we did for complete tripartite graphs. 

One could also explore this question while allowing for more subgraphs. It is easy to see that if $G$ is complete multipartite, then $D_2^4(G)\leq 2$, since we can cover the entire graph with a red and blue star at a vertex in one part and another red and blue star at a vertex in another part. It is also not difficult to determine when $D_2^4(G)=1$, so the question with $4$ subgraphs is not particularly interesting. The behavior of $D_2^3(G)$ is less clear.

\begin{problem}
	Determine the value of $D_2^3(G)$ for complete multipartite graphs $G$.
\end{problem}

\section{Acknowledgments}

The authors would like to thank Cassie Murley for valuable discussions and contributions to this problem. Furthermore, the authors would like to thank the Graduate Research Workshop for Combinatorics (GRWC2019) for facilitating the collaboration of the authors.

\end{document}